\def\dist{\mathop{\rm dist}\nolimits}
\newtheorem{theorem}{Theorem}[section]
\newtheorem{thm}[theorem]{Theorem}
\newtheorem{prop}{Proposition}[section]
\newtheorem{cor}[prop]{Corollary}
\newtheorem{lemma}[prop]{Lemma}
\newtheorem{remark}[prop]{Remark}
\newtheorem{defi}[prop]{Definition}
\newtheorem{example}[prop]{Example}
\numberwithin{equation}{section}
\def\R{\mathbb R}
\def\mathscr{\mathcal }
\def\diag{\text{diag}}
\def\bK{{\mathbf K}}
\def\bF{{\mathbf F}}
\newcommand{\ba}{{\mathbf{a}}}
\newcommand{\bb}{{\mathbf{b}}}
\newcommand{\be}{{\mathbf{e}}}
\newcommand{\bi}{{\mathbf{i}}}
\newcommand{\bj}{{\mathbf{j}}}
\newcommand{\bz}{{\mathbf{z}}}
\newcommand{\diam}{\text{diam}}
\begin{document}

\title[Conformal dimension of self-affine sponges]{When the conformal dimension of  a self-affine sponge of Lalley-Gatzouras type is zero}

\author{Yanfang Zhang} \address{School of  Science, Huzhou University,Huzhou, 313000, Zhejiang,China}
\email{03002@zjhu.edu.cn}

\author{Shu-Qin Zhang$^\star$} \address{School of Mathematics and Statistics, Zhengzhou University, Zhengzhou, 450001, Henan, China}
\email{sqzhang@zzu.edu.cn}

\date{\today}
\thanks {The work is supported by NSFS No. 12101566 and Huzhou Natural Science Foundation No. 2022YZ37. }
\thanks{$\star$ Corresponding author.}

\thanks{{\bf 2000 Mathematics Subject Classification:}  26A16, 28A12, 28A80.\\
 {\indent\bf Key words and phrases:}\ uniformly disconnected, self-affine sponge of Lalley type,  fiber IFS}

\begin{abstract}
 It is well known that if a metric space is uniformly disconnected, then its conformal dimension is zero.
 First,  we  characterize when  a self-affine sponge of Lalley-Gatzouras type is uniformly disconnected.
 Thanks to this characterization,   we show that a self-affine sponge of Lalley-Gatzouras type has conformal dimension
  zero if and only if   it is uniformly disconnected.

\end{abstract}
\maketitle


\section{\bf{Introduction}}
 Quasisymmetry is introduced by Beurling and Ahlfors  \cite{Beurling-Ahlfors-1956} in 1956.
Let $(X, d_X)$ and $(Y,d_Y)$ be two metric spaces. For a given homeomorphism $\eta:[0, \infty)\to [0, \infty)$, a map $f: (X,d_X)\to (Y,d_Y)$ is called  \emph{$\eta$-quasisymmetric} if for all distinct triples $x,y,z\in X$
and $t>0$,
\begin{equation}
\frac{d_X(x,y)}{d_X(y,z)}\leq t \Rightarrow \frac{d_Y(f(x),f(y))}{d_Y(f(y),f(z))}\leq \eta(t).
\end{equation}
We denote by ${\mathcal QS}(X)$ the collection of all quasisymmetric maps defined on $X$.

Pansu \cite{Pansu_1989} introduced \emph{conformal dimension} $\dim_C X$ of $(X,d_X)$, defined as
$$
\dim_C X=\inf_{f\in {\mathcal QS}(X)} \dim_H f(X),
$$
where $\dim_H$ denotes the Hausdorff dimension.
 A metric space $X$ is said to be \emph{minimal for conformal dimension}, or simply \emph{minimal}, if $\dim_C X=\dim_H X$.
 For further background of conformal dimension, we refer to the books  Heinonen \cite{Heinonen01}
  and Mackay and  Tyson \cite{MackayTyson2010}.

Kovalev \cite{Kovalev_2006} proved that if $\dim_H X<1$ then $\dim_C X=0$. So for any metric space $X$,
either $\dim_C X=0$ or $\dim_C X\geq 1$.
When Hausdorff dimension of $X$ is $1$, its conformal dimension can be either $0$ or $1$, see    Tukia \cite{Tukia_1989}
and
Staples and Ward \cite{Staples-Ward-1998}. In \cite{Bishop-Tyson-2001}, Bishop and Tyson constructed minimal Cantor
sets of dimension $\alpha$ for every $\alpha\geq 1$.
In \cite{Hakobyan_2010}, Hakobyan proved that a class of fractal sets in the real line have conformal dimension $1$.

There has been considerable work devoted to the conformal dimension of self-similar sets and self-affine sets.
Tyson and Wu \cite{Tyson-Wu-06} proved that the conformal dimension of Sierpi\'nski gasket is $1$.
Kigami \cite{Kigami-14} gave an upper estimate for the conformal dimension of the Sierpi\'nski carpet.
Bishop and Tyson \cite{Bishop-Tyson-2001PAM} studied the conformal dimension of the antenna set, and Dang and Wen \cite{Dang-Wen-2021} extended the study.   Recently, Binder, Hakobyan and Li \cite{Binder-Hako-Li} show that certain Bedford-McMullen self-affine carpets with uniform fibers are minimal for conformal dimension.

In this paper, we characterize when a diagonal self-affine sponge of Lalley-Gatzouras type has conformal dimension $0$.
Our study is closely related to uniformly disconnectedness.

\begin{defi}[Uniformly disconnected \cite{Heinonen01}]\label{def:uniform}
\emph{A metric space $(X,\rho)$ is uniformly disconnected if there is a constant $\delta_0>0$ such that no $\delta_0$-sequence
exists, where a $\delta_0$-sequence is a sequence of distinct points $(x_0,x_1,\dots,x_n)$ such that  $\rho(x_i,x_{i+1})\leq \delta_0 \rho(x_0,x_n)$.}
\end{defi}

 Heinonen \cite{Heinonen01} proved that

 \begin{lemma}[\cite{Heinonen01}]\label{lem:first}  If a metric space $(X,\rho)$  is uniformly disconnected, then the conformal dimension of $X $ is $0$.
\end{lemma}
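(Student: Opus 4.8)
The plan is to show that uniform disconnectedness allows one to replace $\rho$ by a bi-Lipschitz equivalent \emph{ultrametric}, and then to exploit the special feature of ultrametrics that they may be raised to an arbitrarily large power and still remain metrics. Raising to high powers produces, through a family of quasisymmetric maps, images of arbitrarily small Hausdorff dimension, forcing $\dim_C X=0$.

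First I would build a new function on $X\times X$ as a minimax over finite chains,
$$\rho'(x,y)=\inf\Big\{\max_{0\le i<n}\rho(x_i,x_{i+1})\ :\ x=x_0,x_1,\dots,x_n=y\Big\},$$
the infimum running over all finite chains joining $x$ to $y$. Concatenating a chain from $x$ to $y$ with one from $y$ to $z$ shows immediately that $\rho'(x,z)\le\max\{\rho'(x,y),\rho'(y,z)\}$, so $\rho'$ obeys the ultrametric inequality; symmetry is clear, and the trivial one-step chain gives the upper bound $\rho'(x,y)\le\rho(x,y)$.

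The reverse comparison is where the hypothesis enters. Deleting a loop from a chain can only decrease its minimax, so the infimum defining $\rho'(x,y)$ is unchanged if we restrict to chains of \emph{distinct} points. For any such chain the non-existence of a $\delta_0$-sequence forces $\max_i\rho(x_i,x_{i+1})>\delta_0\,\rho(x,y)$, and passing to the infimum yields $\rho'(x,y)\ge\delta_0\,\rho(x,y)$. In particular $\rho'$ is positive off the diagonal, hence a genuine ultrametric, and the two bounds combine to
$$\delta_0\,\rho(x,y)\le\rho'(x,y)\le\rho(x,y),$$
so that the identity map $\iota\colon(X,\rho)\to(X,\rho')$ is bi-Lipschitz and therefore quasisymmetric. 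Now for each $s>0$ the function $\rho'(x,y)^s$ again satisfies the ultrametric inequality, so it is a metric, and the identity $(X,\rho')\to(X,\rho'^{\,s})$ is quasisymmetric with control function $\eta(t)=t^s$. Since raising the metric to the power $s$ replaces each diameter $d$ by $d^s$, the $\alpha$-dimensional Hausdorff measure in $(X,\rho'^{\,s})$ equals the $s\alpha$-dimensional measure in $(X,\rho')$, whence $\dim_H(X,\rho'^{\,s})=\tfrac1s\dim_H(X,\rho')=\tfrac1s\dim_H X$, the last equality because $\iota$ preserves Hausdorff dimension. Composing, the identity $(X,\rho)\to(X,\rho'^{\,s})$ is a quasisymmetric map defined on $X$, so by the definition of conformal dimension $\dim_C X\le\tfrac1s\dim_H X$ for every $s>0$; letting $s\to\infty$ gives $\dim_C X=0$.

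The main obstacle is the lower comparison $\rho'\ge\delta_0\rho$, as this is the sole point at which uniform disconnectedness is used. It hinges on the loop-removal observation, which guarantees that the infimum defining $\rho'$ is realized along loop-free chains, precisely the chains to which the $\delta_0$-sequence hypothesis applies. Once this bi-Lipschitz bound is secured, the ultrametricity of $\rho'$, the quasisymmetry of the power maps, and the dimension computation are all routine.
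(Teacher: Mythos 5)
The paper offers no proof of this lemma at all --- it is quoted from Heinonen's book --- so there is no internal argument to compare against; what you have written is, in essence, the standard proof underlying that citation, and it is correct. Your chain minimax function $\rho'$ is exactly the David--Semmes device: uniform disconnectedness is equivalent to being bi-Lipschitz (hence quasisymmetrically) equivalent to an ultrametric space. The two delicate points are handled properly: the loop-removal observation legitimately reduces the infimum defining $\rho'$ to chains of \emph{distinct} points, which is precisely the class of chains to which Definition \ref{def:uniform} applies, yielding $\delta_0\,\rho\le\rho'\le\rho$; and the snowflake step is sound because the ultrametric inequality is preserved under arbitrary powers $s>0$ (not merely $s\le 1$, as for general metrics), with the identity map $\eta$-quasisymmetric for $\eta(t)=t^s$. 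The Hausdorff measure rescaling $\dim_H(X,\rho'^{\,s})=\frac{1}{s}\dim_H(X,\rho')$ and the closure of quasisymmetric maps under composition then give $\dim_C X\le \frac{1}{s}\dim_H X$ for every $s>0$, as you say.

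One caveat: letting $s\to\infty$ concludes $\dim_C X=0$ only when $\dim_H X<\infty$; if $\dim_H X=\infty$ the bound $\frac{1}{s}\dim_H X$ is vacuous and your argument gives nothing. So as written your proof establishes the lemma for metric spaces of finite Hausdorff dimension. This is entirely harmless in the present paper, where the lemma is applied to sponges $K\subset[0,1]^d$, so that $\dim_H K\le d<\infty$; but since the statement is phrased for an arbitrary metric space, you should either record this finiteness hypothesis or address the infinite-dimensional case separately.
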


It is folklore  that uniformly disconnectedness  is invariant under quasi-symmetric  maps \cite{Heinonen01}. Our main result is as follows.

\begin{thm}\label{thm:dim0}
Let $K$ be a  self-affine sponge of Lalley-Gatzouras type. Then $\dim_C K=0$
if and only if $K$ is uniformly disconnected.
\end{thm}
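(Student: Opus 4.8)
The plan is to prove the two implications separately. The direction ``$K$ uniformly disconnected $\Rightarrow \dim_C K = 0$'' is immediate from Lemma~\ref{lem:first}, so all the content lies in the converse. I would establish the converse in contrapositive form: assuming $K$ is \emph{not} uniformly disconnected, I would show $\dim_C K \geq 1$, which by Kovalev's dichotomy (\cite{Kovalev_2006}) in particular yields $\dim_C K \neq 0$.

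Two general facts would be set up first. The first is the monotonicity of conformal dimension under inclusion: if $E \subseteq K$, then every $f \in {\mathcal QS}(K)$ restricts to a quasisymmetric map on $E$ with $\dim_H f(E) \leq \dim_H f(K)$, so $\dim_C E \leq \dim_C K$. Thus it suffices to exhibit a subset $E \subseteq K$ with $\dim_C E \geq 1$. The second is the lower bound $\dim_C X \geq \dim_{\mathrm{top}} X$, valid because topological dimension is a homeomorphism invariant dominated by Hausdorff dimension; consequently it is enough to find inside $K$ a nondegenerate continuum, since such a set has topological dimension at least $1$.

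The heart of the argument is to feed the hypothesis ``$K$ is not uniformly disconnected'' into the combinatorial characterization of uniform disconnectedness established earlier in the paper. Negating that characterization should assert that, along the coordinate direction of strongest contraction, the relevant fiber IFS fails its gap condition, so that the first--level cylinders abut in that direction; the self-affine renormalization then lets me iterate this touching pattern through all scales. I would organize the resulting nested chains of cylinders so that their limit contains either a genuine line segment parallel to that axis --- a nondegenerate continuum, whence $\dim_C K \geq \dim_{\mathrm{top}} K \geq 1$ --- or, when the anisotropy prevents an honest continuum from forming, a sub-sponge with uniform fibers whose Hausdorff dimension is at least $1$. In the latter case I would invoke the minimality of such uniform-fiber self-affine carpets, in the spirit of Binder, Hakobyan and Li \cite{Binder-Hako-Li}, for which $\dim_C = \dim_H \geq 1$, again giving $\dim_C K \geq 1$ by monotonicity.

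The main obstacle I anticipate is exactly this extraction step, and it is sharpened by the self-affine nature of the problem: unlike the self-similar case, a Lalley--Gatzouras sponge can be totally disconnected and yet fail to be uniformly disconnected, since the growing eccentricity of the cylinders permits arbitrarily long chains of short hops relative to the diameter without producing any continuum. In that regime the clean topological-dimension bound is unavailable, and one must instead convert the chains guaranteed by the failure of uniform disconnectedness into a quantitative statement --- most naturally a lower bound on the combinatorial modulus of a curve family in the approximating graphs, or the identification of a bi-Lipschitz copy of a known minimal set of dimension at least $1$. Making this conversion precise, and checking that the ordered-ratio and separation hypotheses of the Lalley--Gatzouras setting guarantee that the failure of the gap condition really does force a subset of positive conformal dimension, is where I expect the real work to lie.
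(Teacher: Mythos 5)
Your skeleton agrees with the paper's at the structural level: the uniformly disconnected direction is exactly Lemma~\ref{lem:first}, and for the converse the paper likewise negates the characterization (Theorem~\ref{thm:Lalley}), extracts a subset of $K$, and concludes by monotonicity of conformal dimension under inclusion. But note what the negation actually says: by Theorem~\ref{thm:Lalley}(iii), some fiber IFS $G(u)$ at some vertex $u$ of the labelled tree has attractor $[0,1]$. The fiber acts in an intermediate coordinate attached to $u$, not necessarily ``the direction of strongest contraction,'' and the abutting cylinders are those of $G(u)$, not the first-level cylinders of $K$. The paper then (Theorem~\ref{thm:last}) selects one extension $\phi_j=(g_1,\dots,g_s,h_j,\varphi_{j,s+2},\dots,\varphi_{j,d})$ for each $h_j\in G(u)$, producing a sub-sponge $\bar K$ whose root fiber has attractor $[0,1]$ and all of whose higher fibers are singletons, with $z_0\times \bar K\subset K$; your continuum dichotomy is unnecessary, since when $u$ has rank $d-1$ this same construction degenerates to $\bar K$ being a segment (cf.\ Lemma~\ref{lem:line}), so one argument covers both cases.

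The genuine gap is the step you yourself flag as ``where the real work lies'': proving $\dim_C \bar K\geq 1$ for the extracted set. Your proposed tool---the minimality of uniform-fiber carpets of Binder, Hakobyan and Li \cite{Binder-Hako-Li}---does not apply here: that result concerns specific Bedford--McMullen carpets (integer grid subdivision, equal column counts), whereas $\bar K$ is a Lalley--Gatzouras sponge in $\mathbb{R}^{d-s}$ with arbitrary, non-uniform contraction ratios, possibly of dimension $d-s>2$, and with single-map fibers; there is no off-the-shelf minimality theorem to invoke, and $\dim_H\bar K=1$ exactly, so a combinatorial-modulus lower bound would have to be built from scratch. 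The paper supplies the missing mechanism in Theorem~\ref{thm:simple-IFS}: it constructs an explicit $m$-tree Cantor set $E\subset\mathbb{R}$ whose gap at node $\omega$ between the $(j-1)$-th and $j$-th children is $\varphi'_{\omega,\tau_j}$, proves $\bar K$ is bi-Lipschitz equivalent to $E$ (Theorem~\ref{thm:Lip}, via the distortion estimates of Lemma~\ref{lem:c0c1}), and then verifies Hakobyan's thickness criterion \cite{Hakobyan_2010} through Lemma~\ref{lem:binarytree}: $E$ is $T$-balanced, and the coordinate ordering condition forces $\varphi'_{\alpha,2}/\varphi'_{\alpha,1}\to 0$, so the gap-to-cylinder ratios vanish and $\dim_C E=1$. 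This is precisely the ``bi-Lipschitz copy of a known minimal set'' alternative you mention in passing, but without carrying it out---or replacing it with a workable substitute---your proposal proves only the easy direction.
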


Thanks to Lemma \ref{lem:first} and together with the result of Kovalev \cite{Kovalev_2006}, we need only to show that if $K$ is not uniformly disconnected, then $\dim_C K\geq 1$.
We will prove this fact in three steps.

Our first step is to present an equivalent   definition of  uniformly disconnectedness.
Let $(X,\rho)$ be a metric space and $E$ be a subset of $X$. For $\delta>0$ and $x,y\in E$, if there exists a sequence $\{x=z_1,\dots,z_n=y\}\subset E$ such that $\rho(z_i,z_{i+1})\le \delta$ holds for $1\le i\le n-1$, then we say $x$ and $y$ are $\delta$-equivalent, and write $x\sim y$. The sequence above is called a \emph{$\delta$-chain} connecting $x$ and $y$. Clearly $\sim$ is an equivalence relation.   $E$ is said to be \emph{$\delta$-connected}, if for any $x,y\in E$,  there is a $\delta$-chain joining $x$ and $y$.
 We call $E$ a \emph{$\delta$-connected component} of $X$, if $E$ is
an equivalence class of the relation $\sim$. (See Rao, Ruan and Yang \cite{RRY_2008}.)

\begin{thm} \label{Thm:Uniform}
Let $(X,\rho)$ be a metric space. Then following two statements are equivalent.

(i) $(X, \rho)$ is uniformly disconnected.

(ii) There is a constant $M_0>0$ such that for any $\delta>0$ and any $\delta$-connected component $U$ of $X$,
  $\diam(U) \leq M_0\delta$.
\end{thm}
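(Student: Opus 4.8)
The plan is to prove the two implications separately, in each case by converting between the two scales in play: the \emph{relative} scale $\delta_0$ in Definition~\ref{def:uniform}, where step lengths are measured against the endpoint distance $\rho(x_0,x_n)$, and the \emph{absolute} scale $\delta$ used to define $\delta$-connected components. The bridge in both directions is the substitution $\delta=\delta_0\,\rho(x_0,x_n)$, which is exactly what makes a chain with steps bounded by $\delta$ into a $\delta_0$-sequence and vice versa.

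For (i)$\Rightarrow$(ii), I would take the constant $\delta_0$ from uniform disconnectedness and set $M_0=1/\delta_0$. Given $\delta>0$, a $\delta$-connected component $U$, and two distinct points $x,y\in U$, I would choose a $\delta$-chain joining them of \emph{minimal} length; since excising any repeated vertex still leaves a $\delta$-chain, minimality forces the chain $z_1=x,\dots,z_m=y$ to consist of distinct points. Now if it were the case that $\delta\le\delta_0\,\rho(x,y)$, then every step would satisfy $\rho(z_i,z_{i+1})\le\delta\le\delta_0\,\rho(z_1,z_m)$, so $(z_1,\dots,z_m)$ would be a $\delta_0$-sequence, contradicting (i). Hence $\rho(x,y)<\delta/\delta_0=M_0\delta$, and taking the supremum over $x,y\in U$ gives $\diam(U)\le M_0\delta$, as required.

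For (ii)$\Rightarrow$(i), I would run the reverse conversion and prove it by contradiction. Given the constant $M_0$ from (ii), set $\delta_0=1/(M_0+1)$, which satisfies $\delta_0<1/M_0$, and claim no $\delta_0$-sequence exists. Indeed, suppose $(x_0,\dots,x_n)$ were one; then the distinct endpoints give $D:=\rho(x_0,x_n)>0$, and with $\delta:=\delta_0 D$ every step obeys $\rho(x_i,x_{i+1})\le\delta_0 D=\delta$. Thus the sequence is a $\delta$-chain, so $x_0$ and $x_n$ lie in a common $\delta$-connected component $U$ with $\diam(U)\ge D$. But (ii) forces $\diam(U)\le M_0\delta=M_0\delta_0 D$, whence $1\le M_0\delta_0=M_0/(M_0+1)<1$, a contradiction. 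Therefore no $\delta_0$-sequence exists and $X$ is uniformly disconnected.

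Both implications are elementary, so the work is essentially bookkeeping rather than a deep obstruction. The one point that needs care is ensuring the connecting chain may be taken with distinct points in the first implication (handled by the minimal-length argument), and keeping the inequality directions and the choice of constants $M_0=1/\delta_0$ and $\delta_0=1/(M_0+1)$ consistent so that the relative-to-absolute scale conversion closes up cleanly in each direction.
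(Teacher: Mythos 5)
Your proof is correct and takes essentially the same route as the paper's: both directions rest on the scale conversion $\delta=\delta_0\,\rho(x_0,x_n)$ and a contradiction argument, with only cosmetically different constants (the paper uses $M_0=2/\delta_0$ for (i)$\Rightarrow$(ii) and $\delta_0=1/(2M_0)$ for the converse). Your minimal-length-chain step, which guarantees the chain consists of distinct points before invoking Definition~\ref{def:uniform}, is a detail the paper's (i)$\Rightarrow$(ii) argument leaves implicit, so your write-up is if anything slightly more careful.
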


\begin{remark}\emph{According to Zhang and Huang \cite{ZH22}, a  metric space $(X,\rho)$ is
called \emph{perfectly disconnected} if the statement (ii) in Theorem holds.
 The above theorem asserts  that   uniform disconnectedness and perfect disconnectedness coincide.
  }
\end{remark}

As the second step, we  characterize when a self-affine sponge of Lalley-Gatzouras type is uniformly disconnected.

The self-affine carpet of Lalley-Gatzouras type was first introduced by Lallay and Gatzouras \cite{Lalley92}, and the self-affine sponges of Lalley-Gatzouras type were studied by \cite{Das16}  which is a very general class of self-affine sponges containing Bedford-McMullen carpets. We will give the  definition  in Section \ref{Affine-Sponge}. For recent works related to self-affine sponges of Lalley-Gatzouras type,
we refer to \cite{Das-Fishman-Simmons19,DouglasHowroyd19, FK23,ZY24}.

\begin{figure}[ht]
\centering
\subfigure{\begin{tikzpicture}[scale=3.5]

\tikzstyle{conefill} = [fill=blue!20,fill opacity=0.8]		
  \tikzstyle{squarefill} = [fill=blue!25,fill opacity=0.9]

 \draw(0,0)node[below]{\small $0$}--(1,0)node[below]{\small $1$}--(1,1)--(0,1)node[left]{\small $1$}--(0,0);
\draw(0,1/5)node[left]{\small $\frac{1}{5}$}--(1,1/5);\draw(0,2/5)node[left]{\small $\frac{2}{5}$}--(1,2/5);\draw(0,3/5)node[left]{\small $\frac{3}{5}$}--(1,3/5);
\draw(0,4/5)node[left]{\small $\frac{4}{5}$}--(1,4/5);\draw(1/3,0)node[below]{\small $\frac{1}{3}$}--(1/3,1);\draw(2/3,0)node[below]{\small $\frac{2}{3}$}--(2/3,1);	
\filldraw[squarefill](1/3,0)--(1/3,1/5,0)--(2/3,1/5)--(2/3,0);
\filldraw[squarefill](0,1/5)--(0,2/5)--(1/3,2/5)--(1/3,1/5);
\filldraw[squarefill](2/3,1/5)--(2/3,2/5)--(1,2/5)--(1,1/5);
\filldraw[squarefill](2/3,3/5)--(2/3,4/5)--(1,4/5)--(1,3/5);
\filldraw[squarefill](1/3,4/5)--(2/3,4/5)--(2/3,1)--(1/3,1);
\filldraw[squarefill](0,3/5)--(0,4/5)--(1/3,4/5)--(1/3,3/5);
\end{tikzpicture}}\hskip 1.5cm		
\subfigure{\begin{tikzpicture}[scale=3.5]

 \tikzstyle{conefill} = [fill=blue!20,fill opacity=0.8]		
  \tikzstyle{squarefill} = [fill=blue!25,fill opacity=0.9]

\draw(0,0)node[below]{\small $0$}--(1,0)node[below]{\small $1$}--(1,1)--(0,1)node[left]{\small $1$}--(0,0);
\draw(1/2,0)node[below]{\small $\frac{1}{2}$}--(1/2,1);  
\draw(1/3,0)node[below]{\small $\frac{1}{3}$}--(1/3,1);		\filldraw[squarefill](0,0)--(0,1/6)node[left]{\small $\frac{1}{6}$}--(1/3,1/6)--(1/3,0)--(0,0);	
\filldraw[squarefill](1/2,0)--(1/2,1/5)--(1,1/5)--(1,0)--(1/2,0);
\filldraw[squarefill](1/2,2/5)--(1/2,3/5)--(1,3/5)--(1,2/5)--(1/2,2/5);
\filldraw[squarefill](1/2,4/5)--(1/2,1)--(1,1)--(1,4/5)--(1/2,4/5);

\filldraw[squarefill](0,1/2)--(1/3,1/2)--(1/3,3/5)--(0,3/5);
\draw(0,0.47)node[left]{\small $\frac{1}{2}$};
\draw(0,0.63)node[left]{\small $\frac{3}{5}$};
\draw(0.25,0.1)node[left]{\small $\phi_1$};	
\draw(0.25,0.56)node[left]{\small $\phi_2$};
\draw(0.83,0.1)node[left]{\small $\phi_3$};
\draw(0.83,0.5)node[left]{\small $\phi_4$};
\draw(0.83,0.9)node[left]{\small $\phi_5$};
\end{tikzpicture}}

\caption{IFS for Bedford-McMullen carpet (left) and self-affine carpet of Lalley-Gatzouras type (right). See Example \ref{Exam:FiberIFS}. }\label{Carpet:Lalley}
\end{figure}
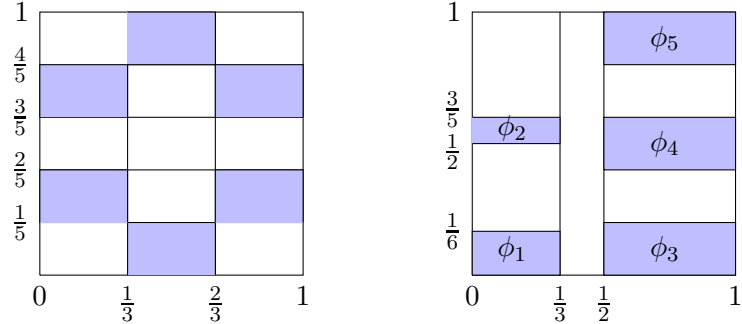

Let $\Phi=\{\phi_j\}_{j=1}^N$ be an IFS of Lalley-Gatzouras type in $\R^d$.  Then $\phi_j$ can be written as
 $$\phi_j(x_1,\dots, x_d)=(\varphi_{j,1}(x_1),\dots,\varphi_{j,d}(x_d)),$$
 where $\varphi_{j,i}(x)$ is a function of the form $ax+b$ with $0<a<1$.
For   precise definition, see Section \ref{Affine-Sponge}.
\emph{For $1\leq \ell \leq d$,} let $\bar{\pi}_\ell: \R^d\rightarrow\R^\ell$ be the projection
$$\bar{\pi}_\ell(x_1,\dots,x_d)=(x_1,\dots, x_\ell),$$
which we call the \emph{$\ell$-th major projection}.
Notice that $\bar{\pi}_d$ is the identity map.
We define the $\ell$-th projection of $\Phi$ to be
 $$
 \Phi_{\{1,\dots, \ell\}}=\{(\varphi_{j,1}, \dots, \varphi_{j,\ell}); 1\leq j\leq N\}, $$
 where if two maps in the right hand side coincide, then we regard it as one element of $\Phi_{\{1,\dots, \ell\}}$.
 Clearly $K_{\ell}=\bar{\pi}_\ell(K)$ is the attractor of $\Phi_{\{1,\dots, \ell\}}$.
 In Section 2, we show that an IFS of Lalley-Gatzouras type can be described by a labelled tree.


\begin{figure}[h] 
\hskip 1.9cm \xymatrix{
&&*++[o][F]{\emptyset}  
\ar[dl]^(0.5){}                               
\ar[dr]^(0.5){}&& \\
& 
*++[o][F]{\frac{x}{3}}
\ar[dl]_(0.6){\frac{x}{6}}\ar[d]^(0.5){\frac{x}{10}+\frac{1}{2}}&
&*++[o][F]{\frac{x+1}{2}}
\ar[dl]_(0.6){\frac{x}{5}}\ar[d]_(0.5){\frac{x+2}{5}}\ar[dr]^(0.58){\frac{x+4}{5}}&&\\
*++[o][F]{\phi_1} &*++[o][F]{\phi_2} &*++[o][F]{\phi_3} &*++[o][F]{\phi_4} &*++[o][F]{\phi_5}
}
\caption{Labeled tree  of the IFS $\Phi$  on the right side of  Figure \ref{Carpet:Lalley}. The fiber IFS related to $\frac{x}{3}$
is $\{\phi_1,\phi_2\}$, and the fiber IFS related to $\frac{x+1}{2}$  is $\{\phi_3, \phi_4,\phi_5\}$.}\label{Labeled Tree}
\end{figure}
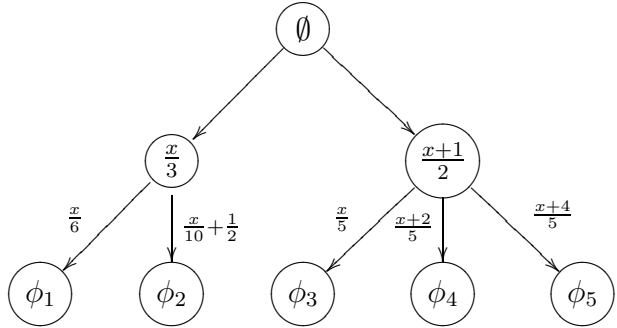


\begin{defi}Let $1\leq \ell \leq d-1$. For $u\in \Phi_{\{1,\dots, \ell\}}$, we define
$$
G(u)=\{h;~(u, h)\in \Phi_{\{1,\dots, \ell+1\}}\},
$$
and  call it a \emph{fiber IFS} of $\Phi$ (or of $K$)  of rank $\ell$.
(For more precise definition, see Section \ref{Affine-Sponge}.)
\end{defi}

Clearly $G(u)$ is an IFS on $[0,1]$.

\begin{thm}\label{thm:Lalley}
Let $K\subset \R^d$ be a  self-affine sponge of Lalley-Gatzouras  type.
Then the following three statements are equivalent.

(i) $K$ is uniformly disconnected.

(ii) All $\bar{\pi}_\ell(K), 1\le \ell \le d,$  are totally disconnected.

(iii) The attractors of all fiber IFS of $K$ are not $[0,1]$.
\end{thm}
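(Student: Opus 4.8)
The plan is to prove the two equivalences $(ii)\Leftrightarrow(iii)$ and $(i)\Leftrightarrow(iii)$. Throughout I treat the first-coordinate system $\Phi_{\{1\}}$ as the fiber IFS $G(\emptyset)$ attached to the root $\emptyset$ of the labelled tree, so that $(iii)$ controls \emph{every} coordinate; this inclusion is forced, since $\bar\pi_1(K)$ is exactly the attractor of $\Phi_{\{1\}}$ and $(ii)$ already asks this set to be totally disconnected. For $(iii)\Rightarrow(ii)$ I would induct on $\ell$: the rank-$0$ system has a gap, so the self-similar set $K_1\subset\R$ is not all of $[0,1]$, hence not an interval, hence totally disconnected; and the fiber of $\bar\pi_\ell\to\bar\pi_{\ell-1}$ over a point of the (inductively) totally disconnected $K_{\ell-1}$ is a Moran set assembled from the rank-$(\ell-1)$ systems, each gapped, so it is totally disconnected, whence a connected component of $K_\ell$ (which projects to a point and lies in one fiber) is a point. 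For $\neg(iii)\Rightarrow\neg(ii)$: if some rank-$\ell$ system $G(u)$ has attractor $[0,1]$, code the fixed point $y^{*}$ of $u$ by the constant word $u$ in the first $\ell$ coordinates and fill the $(\ell+1)$-st by $G(u)$; this produces the segment $\{y^{*}\}\times[0,1]\subseteq K_{\ell+1}$.

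The essential point is $(i)\Leftrightarrow(iii)$, and here I would use the following one-dimensional lemma repeatedly: a self-similar subset of $[0,1]$ that is not all of $[0,1]$ is not an interval, hence is a uniformly disconnected Cantor set; in particular two distinct generating pieces $h_a(A),h_b(A)$ lie at distance $\gtrsim$ their common scale \emph{even when the generating boxes merely touch}, because the attractor does not fill its box. For $(iii)\Rightarrow(i)$ I would verify the perfect-disconnectedness form of Theorem~\ref{Thm:Uniform} by induction on $d$, peeling off the leading coordinate. Given $p\neq q$, put $R=|p-q|_\infty$. If $|p_1-q_1|\ge R/2$, then $p_1,q_1$ are distinct points of the uniformly disconnected Cantor set $K_1$ and are separated by a coordinate-$1$ gap of size $\gtrsim R$, which lifts to a slab separating $p$ and $q$. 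Otherwise $|p_1-q_1|<R/2$, so the difference is carried by coordinates $2,\dots,d$; since coordinate $1$ is the least contracted, $p$ and $q$ then share a coordinate-$1$ cylinder down to the scale $\asymp R$, inside which coordinates $2,\dots,d$ form a Lalley-Gatzouras sponge of dimension $d-1$ whose fiber systems are again gapped, so the induction hypothesis separates the projections of $p,q$ by a gap $\gtrsim R$ in one of the remaining coordinates. Finitely many fiber systems make the separation constant uniform.

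For the converse $\neg(iii)\Rightarrow\neg(i)$ I would zoom. If a rank-$\ell$ system $G(u)$ fills $[0,1]$, then fixing the first $\ell$ coordinates to follow $u$ yields a subset of $K$ isometric to a lower Lalley-Gatzouras sponge $\tilde K$ whose leading (least contracted) coordinate projection is all of $[0,1]$. Inside a deep level-$k$ cylinder of $\tilde K$ the leading side length is $\tilde s^{(k)}_1$, while every other coordinate spread is at most $\tilde s^{(k)}_2$; sweeping the filled leading coordinate therefore produces a chain whose endpoints are at distance $\asymp\tilde s^{(k)}_1$ but whose steps are all $\lesssim\tilde s^{(k)}_2$. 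Because the Lalley-Gatzouras ordering is strict, $\tilde s^{(k)}_2/\tilde s^{(k)}_1\to0$, so for every $\delta_0$ a large $k$ gives a $\delta_0$-sequence; as uniform disconnectedness passes to subsets, $K$ is not uniformly disconnected.

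I expect the main obstacle to be the scale-matching inside $(iii)\Rightarrow(i)$: when the two points agree in the coarse coordinates but differ in finer ones, one must reduce cleanly to the $(d-1)$-dimensional fiber structure \emph{at the scale $\asymp R$}, keeping the comparison constants independent of the cylinder. The two ingredients that make this go through are exactly the one-dimensional lemma above (so that touching boxes still yield separated pieces) and the strict contraction ordering (so that the leading coordinate can always be peeled first and the branching scale controls the full distance); organizing these uniformly over all cylinders and all $u$ simultaneously is the delicate part.
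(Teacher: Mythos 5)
Your overall architecture (induction on $d$, the equivalences $(ii)\Leftrightarrow(iii)$ and $(i)\Leftrightarrow(iii)$, and the negative directions via line segments or $\delta_0$-sequences) matches the paper, and your $\neg(iii)\Rightarrow\neg(i)$ zoom argument is correct -- indeed it is a nice direct alternative to the paper's route, which instead uses the induction hypothesis to force the bad fiber IFS to have rank $d-1$ and then invokes Lemma \ref{lem:line} to produce an actual segment. The genuine gap is in $(iii)\Rightarrow(i)$, and it sits exactly where you predicted the delicacy would be. Your ``one-dimensional lemma'' is false as stated: it is not true that two distinct generating pieces $h_a(A), h_b(A)$ of a gapped simple IFS lie at distance comparable to their scale when their boxes touch. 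Take $f_1(x)=x/3$, $f_2(x)=x/4+1/3$, $f_3(x)=x/5+4/5$ on $[0,1]$: the attractor $A$ satisfies $\sum a_k<1$ (so it is a measure-zero Cantor set and $(iii)$ holds), yet $0=f_1(0)\in A$ and $1=f_3(1)\in A$, whence $f_1(1)=1/3\in f_1(A)$ and $f_2(0)=1/3\in f_2(A)$, so the two pieces actually \emph{intersect}. The same touching point destroys your case reduction ``$|p_1-q_1|<R/2$ implies $p$ and $q$ share a coordinate-$1$ cylinder down to scale $\asymp R$'': two points straddling $1/3$ lie in distinct, touching cylinders at \emph{every} scale, so the peeled $(d-1)$-dimensional induction never gets to run, and the slab you want in the other case need not exist between the pieces at the claimed scale. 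Since separation of touching pieces is precisely what your induction step consumes, the argument as written does not close; note also that asserting ``not an interval, hence uniformly disconnected'' begs the question the paper flags as open for general self-similar sets.

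What the paper does instead is the correct repair, and it is not cosmetic. First, the pre-Moran Lemma \ref{lem:Moran} replaces your false separation lemma: even when basic intervals touch, iterating one more level past the threshold $\delta/(g_*\alpha_*)$ and using the biggest-gap bound $g_*\geq (1-L^*)/(N^*+2)$ produces a gap of length $>\delta$ flanking each parent's offspring, so every $\delta$-component of the fiber Moran set has diameter $O(\delta)$ with constants uniform over all words $i_1\dots i_k$ -- this is the uniform-porosity statement your sketch implicitly assumes. Second, because a $\delta$-component of $K$ can weave through several touching cylinders, the paper covers its projection by $\delta$-approximate squares, uses uniform disconnectedness of $K_{d-1}$ to bound the number $M_1$ of squares in one component, decomposes $K$ over these squares via the product structure of Lemma \ref{lem:product}, and then merges the finitely many fiber pre-Moran sets with the union estimate of Corollary \ref{cor:En}; the coordinate ordering condition enters only through \eqref{eq:width}, to guarantee the Moran-set hypothesis at scale $\delta$. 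Your pairwise ``find a separating gap/slab for each $p\neq q$'' scheme would in principle suffice to exclude $\delta_0$-sequences, but producing that slab uniformly is equivalent to the porosity that Lemma \ref{lem:Moran} plus Corollary \ref{cor:En} establish, so these two ingredients (or substitutes of equal strength) are genuinely missing from your proposal rather than routine details.
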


\begin{remark}\emph{(1) Huang and Zhang \cite{ZH22}  proved that (i) and (ii)
are equivalent when $K$ is a
self-affine Sierpi\'nski sponge, which is the higher dimensional generalization of Bedford-McMullen carpets.
For precise definition, see Remark \ref{rem:Sierpinski}. However, the argument of \cite{ZH22}  does not work for self-affine sponges of Lalley-Gatzouras type.
 To prove Theorem \ref{thm:Lalley}, we need to employ several new ideas, see  Lemma \ref{lem:product}  and  Lemma \ref{lem:Moran}.}

\emph{(2) Item (iii) provides a very easy algorithm to determine the uniformly disconnectedness of $K$.}
\end{remark}

\begin{figure}[ht]	
\subfigure{\begin{tikzpicture}[scale=4.5]
 \tikzstyle{conefill} = [fill=blue!20,fill opacity=0.8]		
  \tikzstyle{squarefill} = [fill=blue!25,fill opacity=0.9]
\draw(0,0)node[below]{\small $0$}--(1,0)node[below]{\small $1$}--(1,1)--(0,1)node[left]{\small $1$}--(0,0);
\draw(3/4,0)node[below]{\small $\frac{3}{4}$}--(3/4,1);  
\draw(1/4,0)node[below]{\small $\frac{1}{4}$}--(1/4,1);
\draw(5/6,0)node[below]{\small $\frac{5}{6}$}--(5/6,1);

\filldraw[squarefill](0,0)--(0,1/6)node[left]{\small $\frac{1}{6}$}--(1/4,1/6)--(1/4,0)--(0,0);	
\filldraw[squarefill](1/4,1/2)--(3/4,1/2)--(3/4,3/4)--(1/4,3/4)--(1/4,1/2);
\filldraw[squarefill](3/4,1/3)--(5/6,1/3)--(5/6,2/5)--(3/4,2/5)--(3/4,1/3);
\filldraw[squarefill](5/6,8/9)--(1,8/9)--(1,1)--(5/6,1)--(5/6,8/9);

\draw(0,0.52)node[left]{\small $\frac{1}{2}$};
\draw(0,0.73)node[left]{\small $\frac{3}{4}$};
\draw(0.25,0.1)node[left]{\small $\phi_0$};	
\draw(0.57,0.63)node[left]{\small $\phi_1$};
\draw(0.88,0.45)node[left]{\small $\phi_2$};
\draw(0.99,0.95)node[left]{\small $\phi_3$};
\end{tikzpicture}}\\
\hskip 0.5 cm\subfigure{\xymatrix{
&&*++[o][F]{\emptyset}  
\ar[dll]^(0.5){}\ar[dl]^(0.5){}                                 
\ar[dr]^(0.5){}  \ar[drr]^(0.5){}&& \\
*++[o][F]{\frac{x}{4}}
\ar[d]_(0.6){\frac{x}{6}}&*++[o][F]{\frac{2x+1}{4}}\ar[d]_(0.6){\frac{x+2}{4}}
&&*++[o][F]{\frac{x+9}{12}}
\ar[d]_(0.6){\frac{x+5}{15}}&*++[o][F]{\frac{x+5}{6}}\ar[d]_(0.6){\frac{x+8}{9}}&\\
*++[o][F]{\phi_0} &*++[o][F]{\phi_1} &&*++[o][F]{\phi_2} &*++[o][F]{\phi_3}
}}
\caption{An example of self-affine carpet of Lalley-Gatzouras type satisfying the conditions in Theorem \ref{thm:simple-IFS}.}
\end{figure}
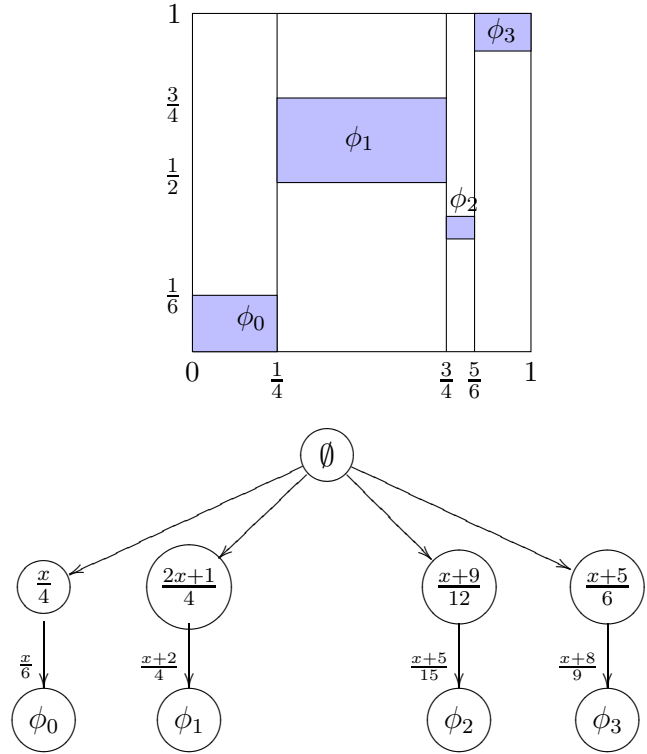

 Our third step is to show that

 \begin{thm}\label{thm:simple-IFS}
 Let $\Phi=\{\phi_j\}_{j=1}^N$ be a diagonal IFS of Lalley-Gatzouras type in ${\mathbb R}^d$ with attractor $\bar{K}$.
 If the fiber IFS of the root $\emptyset$ has cardinality $N$ and with attractor $[0,1]$, and all the other fiber IFS of level $\geq 1$  has cardinality $1$, then $\bar K$ is minimal, more precisely, $\dim_C \bar{K}=\dim_H \bar K= 1$.
 \end{thm}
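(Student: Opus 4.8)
The plan is to separate the statement into a straightforward part, $\dim_H\bar K=1$ together with $\dim_C\bar K\le 1$, and the substantive part, the lower bound $\dim_C\bar K\ge 1$. First I would record the consequences of the two hypotheses for the coding: because the root fiber IFS has cardinality $N$ and every fiber IFS of level $\ge1$ is a singleton, each map $\phi_j$ is determined by its first coordinate $\varphi_{j,1}$, the maps $\varphi_{1,1},\dots,\varphi_{N,1}$ are pairwise distinct, and the full shift $\Sigma=\{1,\dots,N\}^{\N}$ codes $\bar K$ by the same cylinder tree that codes $K_1=\bar\pi_1(\bar K)$. By the separation built into the Lalley--Gatzouras structure the first-coordinate images have disjoint interiors, and since their union is the attractor $[0,1]$ they tile it, so $\sum_{j=1}^N a_{j,1}=1$. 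Now $\bar\pi_1$ is $1$-Lipschitz and onto $[0,1]$, giving $\dim_H\bar K\ge 1$, while covering $\bar K$ by the level-$n$ cylinders $K_\omega$ and using the ordering $a_{j,1}\ge\cdots\ge a_{j,d}$ (so $\diam(K_\omega)\le\sqrt d\,\prod_{k\le n}a_{\omega_k,1}$) yields $\sum_{|\omega|=n}\diam(K_\omega)\le\sqrt d\,(\sum_j a_{j,1})^n=\sqrt d$ with mesh tending to $0$; hence $\mathcal H^1(\bar K)<\infty$ and $\dim_H\bar K\le1$. Taking the identity map in the definition of conformal dimension gives $\dim_C\bar K\le\dim_H\bar K=1$, and by Kovalev's dichotomy \cite{Kovalev_2006} it remains only to rule out $\dim_C\bar K=0$, i.e.\ to exhibit $c>0$ with $\dim_H f(\bar K)\ge c$ for every quasisymmetric $f$. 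Note this lower bound cannot be deduced from Theorem \ref{thm:dim0}, since Theorem \ref{thm:simple-IFS} is one of the ingredients in its proof.

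For the lower bound one must confront the fact that $\bar K$ is totally disconnected at every scale, so there are no curves and no metric crossing-chains to exploit; connectivity-based tools (curve modulus, Loewner estimates) are unavailable, and although Theorem \ref{thm:Lalley} tells us that the hypothesis forces $\bar K$ to fail to be uniformly disconnected, promoting this to a conformal-dimension bound is exactly what must be done by hand. The mechanism I would use instead is that of minimal Cantor sets, in the spirit of Hakobyan \cite{Hakobyan_2010} and Binder--Hakobyan--Li \cite{Binder-Hako-Li}: dimension is carried not by connectivity but by a gap/ratio structure that is quasi-invariant under quasisymmetric maps.

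Concretely, the plan is to realise $\bar K$, up to a bi-Lipschitz equivalence (which preserves conformal dimension), as a Moran-type Cantor set built on $[0,1]$. The singleton-fiber hypothesis is what makes this possible: every coordinate beyond the first is slaved to the first, so the position and separation of the sub-cylinders $K_{ui},K_{uj}$ inside a cylinder $K_u$ depend only on the symbols and scale exactly by the anisotropic factors of $K_u$. Reading off, for each pair of consecutive (in first-coordinate order) branches, the metric separation it produces, one obtains a system of gaps indexed by the tree; the tiling $\sum_j a_{j,1}=1$ guarantees that the first-coordinate lengths close up with no horizontal slack, so the resulting gap sequence has critical exponent $1$, matching $\dim_H\bar K=1$. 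I would then invoke the minimality of such uniform gap-Cantor sets: a quasisymmetric image distorts the family of gaps quasisymmetrically and hence preserves the critical exponent of the gap sequence, so $\dim_H f(\bar K)\ge1$ for every quasisymmetric $f$. Combined with the dichotomy this gives $\dim_C\bar K\ge1$, and with the upper bound, $\dim_C\bar K=\dim_H\bar K=1$.

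The hardest point, where the anisotropy of a self-affine sponge really bites, is setting up the gap-Cantor model correctly. Within a cylinder $K_u$ the two relevant sub-cylinders may be separated horizontally, on the scale of the largest axis $a_{u,1}$, or only in a higher coordinate, on a strictly smaller scale $a_{u,\ell}$, so the metric separations across the tree live at several incommensurate scales and do not obviously assemble into a single one-dimensional gap sequence. This is precisely what a product lemma and a Moran-comparison lemma of the kind announced in the paper are meant to control, by comparing the intrinsic metric of $\bar K$ with that of an explicit Moran set whose ratios are the first-coordinate ratios $a_{j,1}$. The second delicate point is the quasisymmetric invariance of the critical exponent for that model: one must verify that the gap sequence is uniform enough (uniformly perfect, with level ratios bounded away from $0$ and $1$) that no quasisymmetric map can push its exponent below $1$. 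Once these two lemmas are in place the theorem follows as outlined; I expect the metric-comparison step to be the main obstacle.
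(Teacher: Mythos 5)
Your proposal follows essentially the same route as the paper: the paper likewise proves $\bar K$ bi-Lipschitz equivalent to an explicit $m$-tree Cantor set in $\R$ whose cylinder lengths are comparable to the first-coordinate ratios $\varphi'_{\omega,1}$ and whose gaps are the higher-coordinate quantities $\varphi'_{\omega,\tau_j}$ (Theorem \ref{thm:Lip}, resting on Lemma \ref{lem:c0c1}), and then deduces $\dim_C\bar K=1$ from Hakobyan's minimality criterion \cite{Hakobyan_2010} via Lemma \ref{lem:binarytree}, where your ``uniform gap'' verification is realized precisely as $T$-balancedness plus the gap-to-child-length ratio tending to zero, which the coordinate ordering condition supplies through $\varphi'_{\alpha,2}/\varphi'_{\alpha,1}\to 0$ in Lemma \ref{lem:binary}. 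The two lemmas you defer (the metric comparison and the quasisymmetric minimality of the one-dimensional model) are exactly the two steps the paper carries out, so the approaches coincide.
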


 To prove the above theorem, first we show that $\bar{K}$ is Lipschitz equivalent to a Cantor set $E$ in ${\mathbb R}$
 with `small gaps'. Then using a result of Hakobyan \cite{Hakobyan_2010}, we show that $\dim_C \bar{K}=1$.
Finally, we show that  if $K$ is a self-affine sponge of Lalley-Gatzouras type and it is not uniformly disconnected, then $K$ contains
a subset $\bar{K}$ such that $\bar K$ satisfies the conditions in Theorem \ref{thm:simple-IFS}, which completes the proof of Theorem \ref{thm:dim0}.

 Finally, as a by-product, we show that

\begin{thm}\label{thm:union} Let $(X,\rho)$ be a metric space and let $E_1,\dots, E_k\subset X$.
If all $E_j$ are uniformly disconnected, then $\bigcup_{j=1}^k E_j$ is also uniformly disconnected.
\end{thm}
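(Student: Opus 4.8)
The plan is to work through the equivalent formulation of uniform disconnectedness provided by Theorem~\ref{Thm:Uniform}, since the diameter bound on $\delta$-connected components behaves far better under unions than the $\delta_0$-sequence definition does. Thus for each $j$ I fix a constant $M_j$ such that every $\delta$-connected component of $E_j$ has diameter at most $M_j\delta$, for all $\delta>0$; equivalently, any finite chain $w_0,\dots,w_p$ in $E_j$ all of whose consecutive steps are $\le\eta$ satisfies $\rho(w_0,w_p)\le M_j\eta$, because all its points lie in a single $\eta$-connected component of $E_j$. It suffices to treat the case $k=2$ and then induct on $k$, writing $\bigcup_{j=1}^k E_j=\big(\bigcup_{j=1}^{k-1}E_j\big)\cup E_k$ and applying the two-set case to the pair consisting of the (inductively uniformly disconnected) set $\bigcup_{j=1}^{k-1}E_j$ and $E_k$.

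So let $E=E_1\cup E_2$, put $M=\max\{M_1,M_2\}$, fix $\delta>0$ and a $\delta$-connected component $U$ of $E$. For $x,y\in U$ I would take a $\delta$-chain $x=z_0,z_1,\dots,z_n=y$ inside $E$ and cut it into maximal blocks of consecutive points lying in the same $E_i$ (a point belonging to both being assigned to $E_1$). By maximality the blocks $B_1,\dots,B_m$ have strictly alternating labels $1,2,1,2,\dots$; each $B_t$ is a $\delta$-chain inside one $E_{\lambda_t}$, hence is contained in a single $\delta$-connected component $C_t$ of $E_{\lambda_t}$ with $\diam(C_t)\le M\delta$, and the step joining the last point of $B_t$ to the first point of $B_{t+1}$ has length $\le\delta$.

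The naive estimate $\rho(x,y)\le \sum_{t}\diam(C_t)+(m-1)\delta$ is useless because $m$ is unbounded, and overcoming this is the real obstacle. The key step is to bypass the two sets one at a time. Since the label-$2$ component $C_{t+1}$ sits between the label-$1$ components $C_t$ and $C_{t+2}$ and is reached from each by a step of length $\le\delta$, I get $\dist(C_t,C_{t+2})\le \delta+\diam(C_{t+1})+\delta\le (M+2)\delta$ for consecutive same-label components. Consequently the points of all odd (label-$1$) blocks can be strung into one chain lying entirely in $E_1$ whose consecutive steps are at most $(M+2)\delta$: inside each $C_{2i-1}$ use its internal $\delta$-chain, and jump from $C_{2i-1}$ to $C_{2i+1}$ through a pair of points realizing $\dist(C_{2i-1},C_{2i+1})\le (M+2)\delta$. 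Uniform disconnectedness of $E_1$ then bounds the distance between the first and last points of this chain by $M_1(M+2)\delta\le M(M+2)\delta$, independently of $m$; the same bound holds for $E_2$.

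Finally I would assemble the pieces. The point $x$ lies in $C_1$ and $y$ lies in $C_m$, each of diameter $\le M\delta$; using the $E_1$-chain (whose endpoints lie in $C_1$ and in the last odd-labelled component, with span $\le M(M+2)\delta$), together with one extra transition of length $\le\delta$ from $C_{m-1}$ into $C_m$ in the case that $m$ is even, the triangle inequality yields $\rho(x,y)\le (M^2+5M+1)\delta$. As $x,y\in U$ were arbitrary, every $\delta$-connected component of $E_1\cup E_2$ has diameter at most $(M^2+5M+1)\delta$, so Theorem~\ref{Thm:Uniform} shows $E_1\cup E_2$ is uniformly disconnected, which completes the two-set case and, via the induction, the proof. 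The only delicate point is the bypassing argument that eliminates the dependence on the number of blocks $m$; everything else is routine bookkeeping with the triangle inequality.
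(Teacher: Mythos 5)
Your proof is correct, and it takes a genuinely different route from the paper's. Both arguments rest on the characterization in Theorem \ref{Thm:Uniform} and both reduce the general case to a two-set statement followed by induction (the paper does this via Lemma \ref{lem:key} and Corollary \ref{cor:En}), but the core mechanisms of the two-set step differ. The paper never manipulates individual chains: for each $5C\delta$-connected component $H$ of $E$ it forms the cluster ${\mathcal M}(H)$ consisting of $H$ together with all $\delta$-components of $F$ within distance $\delta$ of $H$, proves that distinct clusters are $2\delta$-separated, and concludes that every $\delta$-component of $E\cup F$ is trapped inside a single cluster of diameter at most $9C^2\delta$. You instead take an arbitrary $\delta$-chain in $E_1\cup E_2$, cut it into maximal blocks of alternating label, and bypass $E_2$ by promoting the chain to an $(M+2)\delta$-chain lying entirely in $E_1$; invoking uniform disconnectedness of $E_1$ at the coarser scale $(M+2)\delta$ then eliminates the dependence on the number of blocks, which is exactly the obstacle you correctly identify as the heart of the matter. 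The two ideas are dual in flavor --- the paper's separation estimate shows a $\delta$-component cannot escape a cluster, while your chain surgery directly bounds the span of any chain --- and the constants are comparable (quadratic in $M$ for two sets, tower-type under the induction in both cases). One structural difference worth noting: the paper states Lemma \ref{lem:key} with the scale restriction $\delta\geq a$ because that lemma is reused in the proof of Lemma \ref{lem:uni-dis}, where the component bound is only available above a threshold scale; your all-scales version is precisely what Theorem \ref{thm:union} needs, so nothing is lost here. Two cosmetic points: the infimum $\dist(C_{2i-1},C_{2i+1})$ need not be attained in a general metric space, but you do not need attaining points, since the last point of $B_{2i-1}$ and the first point of $B_{2i+1}$ already satisfy $\rho\leq (M+2)\delta$; and when the first block carries label $2$ the roles of $E_1$ and $E_2$ swap, which your symmetry remark adequately covers.
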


The paper is organized as follows. We start with giving some basic notions
related to self-affine sponge of  Lalley-Gatzouras type. Section \ref{Sec:uniform} is contributed
 to prove Theorem \ref{Thm:Uniform}.
 In Section \ref{proof-main-1} we  prove Theorem \ref{thm:Lalley}. In section \ref{proof-main-2}, we prove Theorem \ref{thm:simple-IFS}, which leads to Theorem \ref{thm:dim0}.

\section{\bf{Self-affine sponge of Lalley-Gatzouras type}}\label{Affine-Sponge}
In this section, we first recall the definition of self-affine sponge of Lalley-Gatzouras type,
 then we use a tree with labels to describe the IFS.

\subsection{Self-affine sponge}
 \begin{defi}\emph{
We call $f:\R^d\to \R^d$, $f(x)=Tx+b$ a \emph{diagonal self-affine mapping}
if $T$ is a $d\times d$ diagonal matrix such that all the diagonal entries are positive numbers.
An  IFS
\begin{equation}\label{eq:IFS-diagonal}
\Phi=\{\phi_k(x)=T_kx+b_k\}_{k=1}^N
\end{equation}
 is called a \emph{diagonal self-affine IFS}
if all the maps $\phi_k(x)$ are  diagonal self-affine contractions.
  The attractor of the IFS, that is, the unique nonempty compact set  $K\subset\R^d$  satisfying $K=\cup_{k=1}^N \phi_k(K)$ (\cite{Hut81}),  is called  a \emph{diagonal self-affine sponge}.  Without loss of generality, we will always assume that  $K$ is subset of $d$-dimensional unit cube $ [0,1]^d$.
  (For an alternative definition of diagonal self-affine IFS, see Das and Simmons  \cite{Das16}).}

 \end{defi}

For a diagonal matrix $T_k$, write $T_k=\diag(T_{k,1}, \dots, T_{k,d})$.
The diagonal self-affine IFS $\Phi$ in \eqref{eq:IFS-diagonal} is said to satisfy the \emph{coordinate ordering condition} if
\begin{equation}\label{eq:ordering}
 T_{k,1}>T_{k,2}>\cdots>T_{k,d}, \quad \forall ~k\in \{1,\dots, N\}.
\end{equation}

For $1\leq i \leq d $, let $\bar{\pi}_i: \mathbb{R}^d\rightarrow\mathbb{R}^i$ be the projection   $$\bar{\pi}_i(x_1,\dots,x_d)=(x_1,\dots,x_i).$$
We define the $i$-th major projection IFS of $\Phi$ to be
\begin{equation}\label{Phi_i}
\Phi_{\{1,\dots, i\}}=\left\{
\left (
\begin{array}{c}
x_1 \\
\vdots\\
x_i
\end{array}
\right )\mapsto
\left (
\begin{array}{ccc}
T_{k,1} & &\\
& \ddots &\\
& & T_{k,i}
\end{array}
\right )
\left (
\begin{array}{c}
x_1 \\
\vdots\\
x_i
\end{array}
\right )+\bar{\pi}_i(b_k)
\right\}_{1\leq k\leq N},
\end{equation}
which is a diagonal IFS on $\R^i$. (Here we emphasise that each map occurs at most once in the above IFS.)
Clearly,
$$K_i:=\bar{\pi}_i(K)$$
is the  attractor of the IFS $\Phi_{\{1,\dots, i\}}$.

The following neat projection condition  can be found  in \cite[Definition 3.1]{Das16}.

\begin{defi}\label{def:good}\emph{
Let $K$ be a diagonal self-affine sponge given by $\Phi$ satisfying \eqref{eq:ordering}.
 We say $\Phi$ satisfies the \emph{neat projection condition},
 if for each $i\in \{1,\dots, d\}$, the IFS $\Phi_{\{1,\dots,i\}}$ satisfies
 the open set condition with the open unit cube $\mathbb{I}^i=(0,1)^i$,  that is,
$$
\left\{ f(\mathbb{I}^i);~{f\in \Phi_{\{1,\dots, i\}}} \right\}
$$
are disjoint.}
\end{defi}

\begin{defi}\emph{ A diagonal self-affine sponge is said to be of the  \emph{Lalley-Gatzouras type}, if
it satisfies the coordinate ordering condition
as well as the neat projection condition.}
\end{defi}

\begin{remark}[Self-affine Sierpi\'nski sponge (Keyon and Peres \cite{KP96})]\label{rem:Sierpinski}
\emph{Suppose $2\leq n_1<n_2<\cdots<n_d$ is a sequence of integers. Let $\Phi$ be a diagonal IFS satisfying that}

\emph{(i) $T_k^{-1}\equiv T=\diag(n_1,\dots, n_d)$ for all $k\in \{1,\dots, N\}$;}

\emph{(ii) $b_k\in \prod_{i=1}^d \{0,1,\dots, n_i-1\},$ \\
then we call $\Lambda_\Phi$ a \emph{self-affine Sierpi\'nski sponge}. }
\end{remark}

\subsection{A tree related to IFS of Lalley-Gatzouras type}\label{subsec:tree}
 \begin{defi}\emph{We call $F=\{f_k(x)=a_k x+b_k\}_{k=1}^n$ a \emph{simple IFS} of $[0,1]$, if
$0<a_k<1$ and $\bigcup_{k=1}^n f_k[0,1]\subset [0,1]$  is a  non-overlapping union.}
\end{defi}

 Denote
$$\text{Aff}^+_{[0,1]}=\{x\to a x+b;~ a\in (0,1), b\in [0,1-a]\},$$
which is  a subfamily of affine functions from $[0,1]$ to $[0,1]$.

Let $\Phi$ be an IFS of Lalley-Gatzouras type. We define a \emph{labeled tree}  $\Gamma$ with respect to $\Phi$ as follows.

\begin{itemize}
\item[(1)] \emph{The vertex set of $\Gamma$  is
$$
V=\{\emptyset\}\cup \bigcup_{j=1}^d V_j,
$$
where $\emptyset$ is the root and $V_j=\Phi_{\{1,\dots, j\}}$ for $j=1,\dots, d$.}
\item[(2)]  The edges of $\Gamma$ are given in the following way: All elements of $V_1$ are offsprings of the root $\emptyset$;
for  $f\in V_{j}$ and $g\in V_{j+1}$, we say
$g$ is an \emph{offspring} of $f$ with label $h$ if $g=(f,h)$, where $h\in \text{Aff}^+_{[0,1]}$. Denote the label of $g$ by $L(g)$.
\end{itemize}

Next, we define the fiber IFS of $u\in V$. Let $1\leq j\leq d-1$.  Clearly,  if $u\in V_j$ and $v_1,\dots, v_\ell$ are offsprings of $u$, then the set
$$
G(u)=\{L(v_i); i=1,\dots, \ell\}
$$
constitutes a simple IFS of $[0,1]$. We call $G(u)$ the \emph{fiber IFS} related to $u$, and call it a \emph{fiber IFS} of $K$ of rank $j$.  Specially,    $G(\emptyset)=V_1=\Phi_{\{1\}}$.


\begin{example}\label{Exam:FiberIFS}
\emph{
Let $\Phi=\{\phi_i\}_{i=1}^5$ with
$$
\phi_1(x_1,x_2)=\left (\frac{x_1}{3}, \frac{x_2}{6}\right ), \quad
\phi_2(x_1,x_2)=\left( \frac{x_1}{3}, \frac{x_2}{10}+\frac{1}{2}\right ),
$$
$$\phi_i(x_1,x_2)=\left( \frac{x_1}{2}+\frac{1}{2}, \frac{x_2}{10}+d_i\right ), \quad i\in \{3,4,5\} ,$$
where  $(d_3,d_4,d_5)=(0,2/5,4/5).$
It is easy to check that the attractor of $\Phi$ is a self-affine carpet of Lalley-Gatzouras type. (See
the right picture in Figure \ref{Carpet:Lalley}).   The labeled tree w.r.t. to   $\Phi$
  is illustrated by Figure \ref{Labeled Tree}.  The vertex set of the tree is
$$
V=V_0\cup V_1\cup V_2=\{\emptyset\}\cup\left \{\frac{x}{3},\frac{x+1}{2}\right \}\cup\{\phi_1,\phi_2,\phi_3,\phi_4,\phi_5\}.
$$
Moreover, the fiber IFS related to $\frac{x}{3}$ (of rank $1$) is $\{\frac{x}{6},\frac{x}{10}+\frac{1}{2}\}$ and fiber IFS related to $\frac{x+1}{2}$ (of rank $1$) is $\{\frac{x}{5},\frac{x+2}{5},\frac{x+4}{5}\}$.}
\end{example}

\section{\bf{Characterization of Uniformly Disconnected}}\label{Sec:uniform}

In this section, we  prove Theorem \ref{Thm:Uniform} and Theorem \ref{thm:union}.
 Recall that Theorem \ref{Thm:Uniform} asserts the following two statements   are equivalent.

(i) $(X, \rho)$ is uniformly disconnected.

(ii) There is a constant $M_0>0$ such that for any $\delta$-connected component $U$ of $X$ with $0<\delta<\diam(X)$,
  $\diam(U) \leq M_0\delta$.

\begin{proof}[\textbf{Proof of Theorem \ref{Thm:Uniform}}]
$(i)\Rightarrow(ii)$:
Assume $X$ is uniformly disconnected, let $\delta_0$ be the constant in Definition \ref{def:uniform}
such that there does not exist a $\delta_0$-sequence.

We give a proof by contradiction. Suppose on the contrary that there is a $\delta>0$ and a $\delta$-connected component $C$ of $X$
such that
$$\diam (C)> (2\delta_0^{-1})\delta.$$
Let $a$ and $b$ be two points in $C$ such  that $\rho(a, b)>\diam(C)/2$.
Let $(a=x_0, \dots, x_n=b)$ be a $\delta$-chain joining $a$ and $b$. Then
$$
\rho(x_i, x_{i+1})\leq \delta< \frac{\delta_0~\diam(C)}{2}\leq \delta_0 \rho(x_0, x_n), \quad \text{ for all } 0\leq i\leq n-1,
$$
which contradicts that $X$ is uniformly disconnected.

%

$(ii) \Rightarrow(i)$:
Assume there exists $M_0 > 0$ such that for every $0 < \delta < \diam(X)$, every $\delta$-connected component $U$ of $X$ satisfies $\diam(U) \leq M_0 \delta$.

Let $\delta_0=1/(2M_0)$.
Suppose on the contrary that  $(x_0,\dots, x_n)$ is a $\delta_0$-sequence in $X$.
Denote $\delta=\delta_0 \rho(x_0, x_n)$.
Then  $\rho(x_i, x_{i+1})<\delta_0\rho(x_0, x_n)=\delta$ for all $0\leq i\leq n-1$.
It follows that   $x_0$ and $x_n$ belong to a same $\delta$-connected component of $X$, which we denote by $C$.
Hence
$$\rho(x_0,x_n)\leq \diam(C) \leq M_0\delta=M_0\delta_0 \rho(x_0,x_n)=\rho(x_0,x_n)/2,$$
which is a contradiction.
\end{proof}

\begin{remark}
 \emph{It is  an open problem whether  totally disconnected and uniformly disconnected are equivalent for
  self-similar sets $X$. It is confirmed in two cases: either  $X$ is a self-similar set of finite type ( Xi and Xiong \cite{XX10} showed this for fractal square, but their methods works for self-similar sets of finite type),
 or $X$ is a self-similar set with uniform contraction ratio and satisfying the open set condition (Luo \cite{Luo19}).
}
\end{remark}

In what follows we   build several lemmas we need in next section.
For $E\subset X$, we denote  by $\mathcal{C}_{\delta}(E)$ the collection of the $\delta$-connected component of $E$.
For $E, F\in X$, we denote
$\text{dist}(E, F)=\inf\{\rho(x,y);~x\in E, y\in F\}$.

\begin{lemma}\label{lem:key}  Let $(X,\rho)$ be a metric space and let $E,F\subset X$.  Let $a>0$ and $C\geq 2$  be two constants.
If for every $\delta\geq a$ and every  $U$ in $\mathcal{C}_\delta(E)\cup \mathcal{C}_\delta(F)$, it holds that
\begin{equation}\label{eq:size-1}
\emph{\diam } (U) \leq C\delta,
\end{equation}
 then for every
$\delta\geq a$ and $V\in \mathcal{C}_\delta(E\cup F)$, we have
$$\emph{\diam } (V) \leq 9C^2\delta.$$

\end{lemma}

\begin{proof} Pick $\delta>a$.
For $H\in \mathcal{C}_{5C\delta}(E)$, we set
\begin{equation}\label{eq:MH1}
{\mathcal M}(H)=H\cup \bigcup  \{G;~G\in \mathcal{C}_{\delta}(F) \text{ and } \dist(H,G)\leq  \delta\}.
\end{equation}
The set ${\mathcal M}(H)$ can be regard as the subset of $E\cup F$ `controlled' by $H$. By \eqref{eq:size-1},
it is easy to see that
$$
\diam({\mathcal M}(H))\leq C(5C\delta)+2(\delta+C\delta).
$$

We claim that if $H_1\neq H_2 \in \mathcal{C}_{5C\delta}$, then
\begin{equation}\label{eq:MH2}
\dist({\mathcal M}(H_1), {\mathcal M}(H_2))\geq 2 \delta.
\end{equation}
Pick $x\in {\mathcal M}(H_1)$ and $\epsilon>0$, by \eqref{eq:MH1}, there exists $x'\in H_1$ such that
$ \rho(x,x')\leq C\delta+\delta+\epsilon$.
Similarly, for $y\in {\mathcal M}(H_2)$, there exists $y'\in H_2$ such that $ \rho(y,y')\leq C\delta+\delta+\epsilon$.
Since $ \rho(x',y')>5C\delta$, we have
$$
\rho(x,y)\geq \rho(x',y')-2(C\delta+\delta+\epsilon)\geq 2\delta-2\epsilon.
$$
Let $\epsilon \to 0$, our claim is proved.

Let $V$ be an element of  $\mathcal{C}_{\delta}(E\cup F)$, then by \eqref{eq:MH2} either
 $V\in \mathcal{C}_\delta(F)$ or there exists $H\in \mathcal{C}_{5C\delta}(E)$ such that  $V\subset {\mathcal M}(H)$.
So
$$\diam(V)\leq \max\{\diam({\mathcal M}(H)), C\delta\}\leq C\cdot (5C\delta)+2(\delta+C\delta)\leq 9C^2\delta.$$
The lemma is proved.
\end{proof}

\begin{cor}\label{cor:En} Let $(X,\rho)$ be a metric space and $\{E_i\}_{i=1}^n$ be a sequence of subsets of $ X$. Let $a>0$ and $C\geq 1$ be two constants.
If for every $\delta\geq a$, every $j\in\{1,\dots, n\}$,  and $U\in \mathcal{C}_{\delta}(E_j)$, it holds
$$\emph{\diam}(U)\leq C\delta,$$
 then for every
$\delta\geq a$ and $V\in \mathcal{C}_{\delta}(\bigcup_{j=1}^n E_j)$, we have
$$\emph{\diam}(V)\leq (9C)^{2^{n-1}}/9 ~\delta.$$
\end{cor}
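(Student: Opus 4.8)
The plan is to prove the statement by induction on $n$, using Lemma \ref{lem:key} as the engine for the inductive step. Write $C_n := (9C)^{2^{n-1}}/9$, so that the assertion is exactly that every $V \in \mathcal{C}_\delta(\bigcup_{j=1}^n E_j)$ satisfies $\diam(V) \leq C_n\delta$ for all $\delta \geq a$. For the base case $n=1$ one has $C_1 = (9C)^{2^0}/9 = C$, so the desired conclusion is precisely the hypothesis placed on $E_1$, and there is nothing to prove.

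For the inductive step, suppose the claim holds for $n-1$ sets and put $F := \bigcup_{j=1}^{n-1} E_j$. The induction hypothesis gives that for every $\delta \geq a$, each member of $\mathcal{C}_\delta(F)$ has diameter at most $C_{n-1}\delta$. Since $C_{n-1} = (9C)^{2^{n-2}}/9 \geq C$ (indeed $C_1 = C$, while $C_{n-1} \geq 9C^2 \geq C$ for $n \geq 3$ because $C \geq 1$), the hypothesis on $E_n$ also yields $\diam(U) \leq C_{n-1}\delta$ for every $U \in \mathcal{C}_\delta(E_n)$. Thus both $E_n$ and $F$ satisfy the size condition \eqref{eq:size-1} of Lemma \ref{lem:key} with the \emph{common} constant $C_{n-1}$, and applying that lemma to the pair $(E_n, F)$ gives $\diam(V) \leq 9C_{n-1}^2\,\delta$ for every $V \in \mathcal{C}_\delta(E_n \cup F) = \mathcal{C}_\delta\big(\bigcup_{j=1}^n E_j\big)$. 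A direct computation closes the induction: $9C_{n-1}^2 = 9\big((9C)^{2^{n-2}}/9\big)^2 = (9C)^{2^{n-1}}/9 = C_n$. In other words, the whole argument is driven by the single recursion $C_n = 9C_{n-1}^2$ with $C_1 = C$, whose solution is exactly the stated $(9C)^{2^{n-1}}/9$.

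The one point that requires care — and which I expect to be the main, though minor, obstacle — is the threshold hypothesis in Lemma \ref{lem:key}, which demands that the common constant be at least $2$. For $n \geq 3$ this is automatic, since then $C_{n-1} \geq C_2 = 9C^2 \geq 9$ whenever $C \geq 1$, so the lemma applies verbatim at every merge after the first. The delicate case is the initial merge $n=2$, where the common constant is $C_1 = C$: here one invokes Lemma \ref{lem:key} directly in the regime $C \geq 2$ to which it is stated, and for $1 \leq C < 2$ one simply enlarges the working constant to $2$ at the outset (the bound $\diam \leq C\delta$ certainly gives $\diam \leq 2\delta$), absorbing the discrepancy into the final estimate. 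Once the threshold is met, the recursion propagates the diameter bound through all $n-1$ successive merges and produces the claimed estimate $\diam(V) \leq (9C)^{2^{n-1}}/9\,\delta$.
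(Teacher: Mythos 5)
Your proof is correct and takes essentially the same route as the paper: induction on $n$ powered by Lemma \ref{lem:key}, with the recursion $C_n = 9C_{n-1}^2$, $C_1 = C$, whose solution is exactly $(9C)^{2^{n-1}}/9$. If anything, you are more careful than the paper's own two-line proof, which silently ignores both the need for a \emph{common} constant in Lemma \ref{lem:key} and the mismatch between the corollary's hypothesis $C\geq 1$ and the lemma's requirement $C\geq 2$ that you patch at the initial merge.
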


\begin{proof} We prove the corollary by induction on $n$. Suppose that for every
$\delta\geq a$ and every $\delta$-component $V$ of $E^*=\bigcup_{j=1}^{n-1} E_j$, it holds that
$$\diam(V)\leq (9C)^{2^{n-2}}/9~ \delta.$$
By Lemma \ref{lem:key}, we have that for every $\delta\geq a$ and every $V\in \mathcal{C}_{\delta}(E^*\cup E_n)$,
$$\diam(V)\leq 9\cdot [(9C)^{2^{n-2}}/9]^2 \delta=  (9C)^{2^{n-1}}/9~\delta.$$
\end{proof}


Then by Corollary \ref{cor:En} we could give a proof of Theorem \ref{thm:union}.
\begin{proof}[\textbf{Proof of Theorem \ref{thm:union}.}]
It is to show that there exists $M>0$ such that  for any $U \in \mathcal{C}_{\delta}(\bigcup_{j=1}^n E_j)$ it satisfies $\diam(U)\leq M\delta$. Since $E_j$ is uniformly disconnected for $1\leq j \leq n$, there exists $M_j$ such that  for any $U\in \mathcal{C}_{\delta}(E_j)$ with $0<\delta<\diam(X)$, it satisfies $\diam(U)\leq M_j \delta$. Denote $C=\min_{1\leq j \leq n}{M_j}$. Then we have the condition in Corollary  \ref{cor:En}. Set $M=(9C)^{2^{n-1}}/9$, then the theorem is proved.
\end{proof}

\section{\bf{Proof of Theorem \ref{thm:Lalley}}}\label{proof-main-1}

  Let $\Phi=\{\phi_j\}_{j=1}^N$ be an IFS of Lalley-Gatzouras type and $K$ be its attractor.
Define the symbol set $\Sigma=\{1,\dots, N\}
$.

  For any word $\be=e_1\dots e_n\in \Sigma^n$,  let $\phi_{\be}=\phi_{e_1}\circ\dots\circ\phi_{e_n}$.
We refer to
$$
\bK_n=\bigcup_{\be\in \Sigma^n} \phi_{\be}([0,1]^d)
$$
the \emph{$n$-th approximation of the attractor $K$}  which consists of all level-$n$ cylinders.  Let $\pi:\Sigma^\infty\to K$ be the coding map which is given by $\pi(\omega)=\cap_{n\geq 1}\phi_{\omega_{|n}}([0,1]^d),$
where $\omega_{|n}=\omega_1\dots\omega_n$ is the prefix of length $n$. This map assigns each infinite sequence a unique point in the attractor.

 Denote $\phi_j=(\varphi_{j,1}, \dots, \varphi_{j,d})$ and
let $\varphi'_{j,i}$ be the contraction ratio of $\varphi_{j,i}$.
For any cylinder set $\phi_{e_1\dots e_k}(K)$, we define its \emph{width} to be the  length  of its shortest side of $\phi_{e_1\dots e_k}([0,1]^d)$ and denote by $S(\phi_{e_1\dots e_k}(K))$.

\subsection{\textbf{Approximate squares of self-affine sponges}}\label{Appro-Square}
To prove Theorem \ref{thm:Lalley}, the first gradient is a notion called  approximate square,
which is an important  tool to study the variety  properties of $K$.

Let $\delta\in (0,1)$ and $\be=(e_k)_{k\geq 1}\in \Sigma^\infty.$
For each $j\in \{1,\dots, d\}$, let $\ell(j)$ be the smallest integer such that
$$
(\varphi_{e_1,j})'\cdots (\varphi_{e_{\ell(j)},j})'<\delta.
$$
 We call
\begin{equation}
Q(\be, \delta):=\prod_{j=1}^d (\varphi_{e_1,j})\circ\cdots \circ(\varphi_{e_{\ell(j)},j})([0,1])
\end{equation}
the \emph{$\delta$-approximate square} w.r.t. $\be$.


Let ${\bF}=\{f_1,\dots, f_n\}$ be a family of maps on $\R^d$. We shall use the Hutchinson operator
$$
{\bF}(U)=\bigcup_{j=1}^n f_j(U)
$$
for any $U\subset\R^d$.

  Recall that $\Phi_{\{1,\dots, d-1\}}=\{\bar{\pi}_{d-1}(\phi_j);~~1\leq j \leq N\}$. For simplicity,  we denote
$\Phi_{\{1,\dots, d-1\}}=\{\varphi_j\}_{j=1}^{N'}$  and set $\Sigma'=\{1,\dots, N'\}$.

 For $i\in \Sigma'$, let $\bF_i$ be the   fiber IFS corresponding to $\varphi_i$.
For $\bi=i_1\dots i_k\in (\Sigma')^k$, we define
\begin{equation}\label{eq:E-gamma}
E_{\bi}=\bF_{i_1}\circ\cdots \circ \bF_{i_k}([0,1]).
\end{equation}

\begin{lemma}\label{lem:product}
It holds that
$$\Phi^k([0,1]^d)=\bigcup_{\bi\in (\Sigma')^{k}} \varphi_{\bi}([0,1]^{d-1})\times E_{\bi} .$$
\end{lemma}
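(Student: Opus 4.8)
The plan is to prove the identity by induction on $k$, exploiting the fact that every $\phi_j$ is diagonal and hence factors as a product of its action on the first $d-1$ coordinates and its action on the last coordinate. First I would fix notation for the index collapse: since $\Phi_{\{1,\dots,d-1\}}$ identifies coinciding maps, there is a natural surjection $\sigma:\Sigma\to\Sigma'$ determined by $\bar{\pi}_{d-1}(\phi_j)=\varphi_{\sigma(j)}$, and under it the last-coordinate map $\varphi_{j,d}$ is precisely an element of the fiber IFS $\bF_{\sigma(j)}$. The crucial structural remark is that $j\mapsto(\sigma(j),\varphi_{j,d})$ is a bijection from $\Sigma$ onto the set of pairs $\{(i,f):i\in\Sigma',\ f\in\bF_i\}$: it is surjective by the definition of the fiber IFS, and injective because two maps sharing the same base and the same fiber map must coincide, and we may assume the $\phi_j$ are distinct.

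Because each $\phi_j$ is diagonal, writing points of $\R^d$ as $(y,t)$ with $y\in\R^{d-1}$ and $t\in\R$ gives $\phi_j(A\times B)=\varphi_{\sigma(j)}(A)\times\varphi_{j,d}(B)$ for any $A\subset\R^{d-1}$ and $B\subset\R$. This is the only place the diagonal hypothesis enters, and it is what allows the base and fiber dynamics to be decoupled. For the base case $k=1$ I would compute
$$\Phi([0,1]^d)=\bigcup_{j\in\Sigma}\varphi_{\sigma(j)}([0,1]^{d-1})\times\varphi_{j,d}([0,1]),$$
group the union according to the common value $i=\sigma(j)$, and apply the set identity $\bigcup_\alpha(A\times B_\alpha)=A\times\bigcup_\alpha B_\alpha$ together with the bijection above to rewrite the inner union as $\bigcup_{f\in\bF_i}f([0,1])=\bF_i([0,1])=E_i$.

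For the inductive step, assuming the identity for $k-1$, I would apply $\Phi$ to $\Phi^{k-1}([0,1]^d)=\bigcup_{\bj\in(\Sigma')^{k-1}}\varphi_{\bj}([0,1]^{d-1})\times E_{\bj}$, distribute the union $\Phi=\bigcup_{j\in\Sigma}\phi_j$ through it, and use the product-factoring of each $\phi_j$ to obtain terms $\varphi_{\sigma(j)\bj}([0,1]^{d-1})\times\varphi_{j,d}(E_{\bj})$, where $\sigma(j)\bj$ denotes the concatenation. Re-indexing by $\bi=\sigma(j)\bj\in(\Sigma')^k$ and again factoring out the common base $\varphi_{\bi}([0,1]^{d-1})$, the fiber part collapses, for $\bi=i_1i_2\dots i_k$, to $\bigcup_{f\in\bF_{i_1}}f(E_{i_2\dots i_k})=\bF_{i_1}(E_{i_2\dots i_k})=E_{\bi}$ by the definition of the Hutchinson operator and of $E_{\bi}$ in \eqref{eq:E-gamma}. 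This delivers exactly the claimed decomposition at level $k$.

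The routine parts here are the diagonal product-factoring and the set-theoretic regrouping. The main obstacle, modest as it is, lies entirely in the bookkeeping of the index collapse: one must check carefully that collecting $\varphi_{j,d}$ over all $j$ with $\sigma(j)=i$ reproduces the fiber IFS $\bF_i$ exactly as a set (so that any multiplicities are irrelevant), and that the base composition matches $\varphi_{\bi}$ under the re-indexing. Once this correspondence between words in $\Sigma^k$ and pairs consisting of a word $\bi\in(\Sigma')^k$ together with an admissible choice of fiber maps is pinned down, the equality follows.
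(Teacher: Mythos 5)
Your proposal is correct and takes essentially the same route as the paper's proof: induction on $k$, using the diagonal structure to factor each map into its base and fiber parts, regrouping the union, and collapsing the fiber union into $E_{\bi}$ via the Hutchinson operator applied to \eqref{eq:E-gamma}. The only difference is expository — you spell out the base case and the index-collapse bijection $j\mapsto(\sigma(j),\varphi_{j,d})$, which the paper leaves implicit in the identity $\Phi=\bigcup_{i_1\in \Sigma'}\bigcup_{f\in \bF_{i_1}}(\varphi_{i_1},f)$ (and, as you note only in passing, surjectivity of this correspondence already suffices for the set equality).
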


\begin{proof} We prove the lemma by induction on $k$.
Using the induction hypothesis, we have
$$
\begin{array}{rl}
\Phi^k([0,1]^d) &= \Phi\circ\Phi^{k-1}([0,1]^d) \\
&=\Phi\left (\bigcup_{\bi'\in (\Sigma')^{k-1}} \varphi_{\bi'}([0,1]^{d-1})\times E_{\bi'}\right ) \\
& =\bigcup_{i_1\in \Sigma'}\bigcup_{f\in \bF_{i_1}} (\varphi_{i_1}, f)\left (\bigcup_{\bi'\in (\Sigma')^{k-1}} \varphi_{\bi'}([0,1]^{d-1})\times E_{\bi'}\right )\\
&=\bigcup_{i_1\in \Sigma'} \bigcup_{\bi'\in (\Sigma')^{k-1}} \left (\varphi_{i_1}\circ\varphi_{\bi'}([0,1]^{d-1})\times \bigcup_{f\in \bF_{i_1}}f(E_{\bi'})\right )\\
&=\bigcup_{i_1\in \Sigma'} \bigcup_{\bi'\in (\Sigma')^{k-1}} \left (\varphi_{i_1}\circ\varphi_{\bi'}([0,1]^{d-1})\times E_{i_1\bi'}\right )\\
&=\bigcup_{\bi\in (\Sigma')^{k}} \varphi_{\bi}([0,1]^{d-1})\times E_{\bi}.
\end{array}
$$
The lemma is proved.
\end{proof}

\subsection{\textbf{$\delta$-components of pre-Moran sets}}\label{Delta-compo} To prove Theorem \ref{thm:Lalley},
the second gradient is to analysis the diameter of  $\delta$-components of a class of Cantor sets, called pre-Moran sets.

Let
$$\{\bF_j\}_{j=1}^p$$
be a family of  simple IFS' of $[0,1]$ such that
the attractor of each $\bF_j$ is not $[0,1]$.
Let $\alpha_j$ be the minimal ratio, and $\beta_j$ be the maximal ratio
of contractions in
  $\bF_j$. Denote
$$\alpha_*=\min_{1\leq j\leq p} \alpha_j, \ \ \beta^*=\max_{1\leq j\leq p} \beta_j.$$
We use  $L_j$ to denote the Lebesgue measure of $\bF_j([0,1])$, and use $N_j$ to denote the number of maps in $\bF_j$.
Denote
$$N^*=\max_{1\leq j\leq p} N_j, \  L^*=\max_{1\leq j\leq p} L_j.$$
 Let $g_j$ be the biggest gap of $\bF_j([0,1])\cup\{0,1\}$. Then
$$g_j\geq (1-L^*)/(N^*+2):=g_*.$$

  For $i_1i_2\dots i_k\in\{1,2,\dots,p\}^k$, set
$$E_{i_1\dots i_k}={\bF}_{i_1}\circ \cdots \circ \bF_{i_k}([0,1]),$$
and we call it a \emph{pre-Moran set}. We call $f_{1,h_1}\circ f_{2,h_2}\circ\dots\circ f_{k,h_k}([0,1])$ a \emph{basic interval} of rank $k$ for $f_{j,h_j}\in \bF_{i_j}$.
It is easy to see that $E_{i_1\dots i_k}$ is union of basic intervals of rank $k$.

\begin{remark}\emph{ Let $(e_j)_{j\geq 1}$ be a sequence in $\{1,\dots,p\}^\infty$, then the limit set
$$
\bigcap_{k=1}^\infty {\bF}_{i_1}\circ \cdots \circ \bF_{i_k}([0,1])
$$
is called  a Moran set.  See \cite{H-R-W-W2000,Wen2001}.
}
\end{remark}

Recall that $\mathcal{C}_{\delta}(E)$  denotes all the $\delta$-connected component of  $E\subset \mathbb{R}^d$. The following lemma gives a upper bound of the diameter of $\delta$-connected components of a pre-Moran set.

 \begin{lemma}\label{lem:Moran}
Let $i_1\dots i_k\in \{1,2,\dots,p\}^k$. If
$$\delta\geq \frac{g_*}{\alpha_*}\prod_{j=1}^k \beta_{i_j},$$
then for every  $V\in \mathcal{C}_{\delta}(E_{i_1\dots i_k})$, we have
 $$
\emph{\diam}(V)\leq (2(g_*\alpha_*)^{-1}+1)\delta.
 $$
  \end{lemma}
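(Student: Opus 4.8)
The plan is to analyse a $\delta$-component through its two extreme points and to locate inside it a single basic interval that is large (comparable to the diameter) yet is forced to contain a complementary gap lying strictly between those extreme points. Since the attractor of each $\bF_j$ is not $[0,1]$, such a gap must occupy a definite proportion $g_*$ of that interval, and this is exactly what pins down the scale.

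First I would put $\ell=\diam(V)$ and choose $x=\min V$, $y=\max V$. Because $E_{i_1\dots i_k}$ is a finite union of solid rank-$k$ basic intervals, $x$ and $y$ are endpoints of such intervals; the (solid, hence $\delta$-connected) rank-$k$ interval through $x$ lies in $V$, so $x$ is its left endpoint, and likewise $y$ is a right endpoint. The decisive elementary observation is that a $\delta$-chain in $V$ from $x$ to $y$ must step across every gap of $E_{i_1\dots i_k}$ contained in $(x,y)$; hence every such gap has length $\le\delta$. The problem thus reduces to proving $y-x\le(2(g_*\alpha_*)^{-1}+1)\delta$ from the single structural input that all gaps in $(x,y)$ have length at most $\delta$.

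Next I would extract a large basic interval. Let $m$ be the least rank for which $[x,y]$ contains a complete rank-$m$ basic interval; this exists with $m\le k$, since the rank-$k$ interval at $x$ qualifies. By minimality no complete rank-$(m-1)$ interval lies inside $[x,y]$, so $[x,y]$ meets at most the two rank-$(m-1)$ intervals $I'_x\ni x$ and $I'_y\ni y$, and as the single gap between them lies in $(x,y)$ we get $\ell\le|I'_x|+|I'_y|+\delta$; hence the larger, say $I'$, satisfies $|I'|\ge(\ell-\delta)/2$. The core claim is then: a complete basic interval $C$ of rank $\le k-1$ lying in $[x,y]$ and flanked by further points of $E_{i_1\dots i_k}$ inside $(x,y)$ has a gap of length $\ge g_*|C|$ lying in $(x,y)$, so $|C|\le\delta/g_*$. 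Indeed, at its next subdivision level $C$ carries a biggest gap of length $\ge g_*|C|$; if that gap is interior to $C$ it already lies in $(x,y)$, and if it is a boundary gap of $C$ it merges with the adjacent complementary interval of $E_{i_1\dots i_k}$, which still lies in $(x,y)$ precisely because $C$ is flanked on that side. Applying this to a rank-$m$ child $C$ of $I'$ inside $[x,y]$ gives $|C|\le\delta/g_*$, and since $|C|\ge\alpha_*|I'|$ we obtain $|I'|\le\delta/(g_*\alpha_*)$; combined with $|I'|\ge(\ell-\delta)/2$ this yields exactly $\ell\le2\delta/(g_*\alpha_*)+\delta$.

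The hard part will be the bookkeeping at the boundary. Two degenerate configurations must be absorbed: first, a suitable flanked child $C$ may fail to exist when $x$ (or $y$) sits in the rightmost (leftmost) child of $I'$, in which case I would symmetrise to the $I'_y$ side or follow the address of $x$ downward until it becomes a genuine left endpoint, which happens at the latest at rank $k$; second, when the relevant interval has rank $k$ it has no interior gap, and here the hypothesis enters in the form $\prod_{j=1}^k\beta_{i_j}\le(\alpha_*/g_*)\delta$, so that such a solid interval already has length at most $(\alpha_*/g_*)\delta\le(2(g_*\alpha_*)^{-1}+1)\delta$. Making the ``interior gap versus merged boundary gap'' dichotomy precise, and verifying that the recursion on an extreme point terminates with the stated constant, is the main obstacle; the rest is just the two inequalities $|I'|\ge(\ell-\delta)/2$ and $|I'|\le\delta/(g_*\alpha_*)$.
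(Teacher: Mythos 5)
Your overall strategy---analyzing a single $\delta$-component through its extreme points $x,y$, noting that every $E$-free gap inside $(x,y)$ must be jumped by a $\delta$-chain and so has length at most $\delta$, and converting the hypothesis that no attractor is $[0,1]$ into a gap of relative size $g_*$ at the next subdivision of a complete basic interval---is sound, and it is organized genuinely differently from the paper's proof. The paper runs a global stopping-time argument: it subdivides until a basic interval first has length below $\delta/(g_*\alpha_*)$, subdivides once more, notes each stopped parent has length at least $\delta/g_*$ and hence (via $g_*=(1-L^*)/(N^*+2)$) contains an $E$-free gap longer than $\delta$, so a $\delta$-component meets at most two stopped families and one gap, giving $2\delta/(g_*\alpha_*)+\delta$. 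Incidentally, your ``interior gap versus merged boundary gap'' dichotomy is unnecessary: the paper's $g_j$ is the biggest gap of $\bF_j([0,1])\cup\{0,1\}$, so even a boundary gap of the subdivision of $C$ is already a subinterval of $C\subset[x,y]$ disjoint from $E$, hence crossed by the chain; thus \emph{every} complete basic interval $C\subset[x,y]$ of rank at most $k-1$ satisfies $|C|\le\delta/g_*$, with no flanking hypothesis.

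The genuine gap is in your headline combination. Minimality of $m$ produces a complete rank-$m$ interval inside $[x,y]$ lying in exactly one of $I'_x$, $I'_y$---possibly the \emph{smaller} one---so for the larger interval $I'$ you may have no complete child in $[x,y]$ at all (e.g.\ $y$ can sit in the leftmost child of $I'_y$ at every level, with overhang past $y$), and the inequality $|I'|\le\delta/(g_*\alpha_*)$ is unproven exactly where you need it. Your fallback of ``following the address of $x$ downward'' is the right move, but as sketched it spends one boundary gap, hence one $\delta$, per level of descent, which degrades the bound to a depth-dependent constant rather than the stated one. What rescues the route is to bound $\max I'_x - x$ and $y-\min I'_y$ instead of $|I'_x|,|I'_y|$, and to observe that the consecutive boundary strips produced while the address stays in the rightmost (resp.\ leftmost) child are \emph{adjacent} $E$-free intervals whose union, together with the middle gap between $I'_x$ and $I'_y$, forms a \emph{single} $E$-free interval with endpoints in $[x,y]$, hence of total length at most $\delta$; stopping each descent at the first level with a genuine sibling (bounded by $\delta/g_*$ if its rank is $\le k-1$, and by the hypothesis $\prod_{j}\beta_{i_j}\le(\alpha_*/g_*)\delta$ if rank $k$) then bounds each side by $\delta/(g_*\alpha_*)$ and yields exactly $(2(g_*\alpha_*)^{-1}+1)\delta$. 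Without this merging observation your recursion does not terminate with the stated constant, so as written the proposal does not yet prove the lemma.
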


\begin{proof}
 For $1\leq j\leq k$, denote  ${\bF}_{i_j}=\{f_{j,h}\}_{h=1}^{N_{i_j}}$ and
 denote the contraction ratio of $f_{j,h}$ by $c_{j,h}$.

 Now we regard the sequence $\bF_{i_1}([0,1]), \bF_{i_1}\circ \bF_{i_2}([0,1]), \dots, \bF_{i_1}\circ\dots\circ\bF_{i_k}([0,1])$ as a process of iteration and each of $\bF_{i_1}\circ \dots\circ \bF_{i_j}([0,1])$ consists of basic intervals of rank $j$.
 As soon as a basic interval has length less than $\delta/(g_*\alpha_*)$, then we iterate it one more time and stop iterating.
 Precisely, for each $1\leq q\leq k$, set
 $$
 \Omega^q=\prod_{j=1}^q \{1,\dots, N_{i_j}\}.
 $$
 Define
 $$
 \Omega^q(\delta)=\{u_1\dots u_q\in \Omega^q; ~ c_{1,u_1}\cdots c_{q, u_q}< \delta/(g_*\alpha_*)\leq  c_{1,u_1}\cdots c_{{q-1}, u_{q-1}}  \}.
 $$
 (If $q=1$, by convention, we set the right side of above inequality  to be $1$.)

Denote $I_{{u_1} \dots {u_q}}=f_{1,u_1}\circ \cdots \circ f_{q,u_q}([0,1])$.
Let
\begin{equation}\label{eq:HH}
H=\bigcup_{q=1}^{k-1} \bigcup_{u_1\dots u_q\in \Omega^q(\delta)}\{I_{u_1\dots u_q\ell};~   1\leq \ell\leq N_{i_{q+1}}\}
\end{equation}
 be a union of intervals that continue one more step  beyond those that just fell below the threshold $\delta/(g_*\alpha_*)$.
Since every basic  interval $I$   in $E_{i_1\dots i_k}$ has length smaller than $\prod_{j=1}^k \beta_{i_j}$,
we have
$$
|I|\leq \alpha_*\delta/g_*\leq (\alpha_*)^2 c_{1,u_1}\cdots c_{q, u_{q-1}} \leq  |I_{u_1\dots u_q\ell}|,
$$
which implies that $E_{i_1\dots i_k}$ is a subset of $H$.

 Now we estimate the size of the $\delta$-connected components of $H$.  We first estimate the size of the basic interval. We have
 $$ |I_{u_1\dots u_q}|\geq |c_{1,u_1}\cdots c_{q, u_{q-1}} \alpha_{i_q} |\geq \delta/(g_*\alpha_*)\cdot \alpha_*\geq \delta/g_*,$$
 which implies that
 $$(1-L^*)|I_{u_1\dots u_q}|\geq(N^*+2)\delta,$$
 the above inequality means that the total gap by successors of $I_{u_1\dots u_q}$ is bigger than $(N_{i_{q+1}}+2)\delta$.   Then there is a gap of length bigger than $\delta$, among the intervals $I_{u_1\dots u_q\ell}, 1\leq \ell\leq N_{i_{q+1}}$,
either on the left side of $I_{u_1\dots u_q1}$, or on the right of $I_{u_1\dots u_qN_{i_{q+1}}}$. 
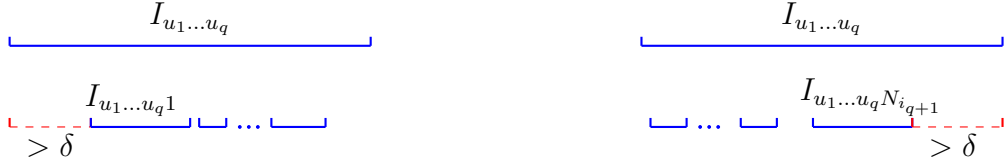
\begin{figure}[h]

\begin{tikzpicture}[scale=1.2]
  \draw[blue, thick] (0,2.5) -- (4,2.5);
  \draw[blue, thick] (0,2.5) -- (0,2.6); %
  \draw[blue, thick] (4,2.5) -- (4,2.6); %
  \node[above] at (2,2.5) {$I_{u_1\dots u_q}$};
  \draw[blue, thick] (7,2.5) -- (11,2.5);
  \draw[blue, thick] (7,2.5) -- (7,2.6); %
  \draw[blue, thick] (11,2.5) -- (11,2.6); %
  \node[above] at (9,2.5) {$I_{u_1\dots u_q}$};

\draw[red,dashed] (0,1.6) -- (0.9,1.6);
  \draw[red, thick] (0,1.6) -- (0,1.7); %
  \draw[red, thick] (0.9,1.6) -- (0.9,1.7); %
\node[] at (0.45,1.4) { $> \delta$};

\draw[blue, thick] (0.9,1.6) -- (2.0,1.6);
  \draw[blue, thick] (0.9,1.6) -- (0.9,1.7); %
  \draw[blue, thick] (2.0,1.6) -- (2.0,1.7); %
\node[above] at (1.35,1.6) {$I_{u_1\dots u_q 1}$};


  \draw[blue, thick] (2.1,1.6) -- (2.4,1.6);
  \draw[blue, thick] (2.1,1.6) -- (2.1,1.7); %
  \draw[blue, thick] (2.4,1.6) -- (2.4,1.7); %
    \fill[blue] (2.55,1.6) circle (0.6pt);
    \fill[blue] (2.65,1.6) circle (0.6pt);
    \fill[blue] (2.75,1.6) circle (0.6pt);

  \draw[blue, thick] (2.9,1.6) -- (3.5,1.6);
  \draw[blue, thick] (2.9,1.6)-- (2.9,1.7); %
  \draw[blue, thick] (3.5,1.6) -- (3.5,1.7); %

\draw[blue, thick] (7.1,1.6) -- (7.5,1.6);
\draw[blue, thick] (7.1,1.6) -- (7.1,1.7); %
\draw[blue, thick] (7.5,1.6) -- (7.5,1.7); %
\fill[blue] (7.65,1.6) circle (0.6pt);
    \fill[blue] (7.75,1.6) circle (0.6pt);
    \fill[blue] (7.85,1.6) circle (0.6pt);

\draw[blue, thick] (8.1,1.6) -- (8.5,1.6);
\draw[blue, thick] (8.1,1.6) -- (8.1,1.7); %
\draw[blue, thick] (8.5,1.6) -- (8.5,1.7); %
\draw[blue, thick] (8.9,1.6) -- (10,1.6);
\draw[blue, thick] (8.9,1.6) -- (8.9,1.7); %
\draw[blue, thick] (10,1.6) -- (10,1.7); %

\node[above] at (9.55,1.6) {$I_{u_1\dots u_q N_{i_{q+1}}}$};

\draw[red,dashed] (10,1.6) -- (11,1.6);
  \draw[red, thick] (10,1.6) -- (10,1.7); %
  \draw[red, thick] (11,1.6) -- (11,1.7); %
\node[] at (10.45,1.4) { $> \delta$};
\end{tikzpicture}
\caption{The red dashed lines show the biggest gap bigger than $\delta$.} 

\end{figure}

So a $\delta$-connected component of $H$ consists of at most two sets in the union
 on the right hand side of \eqref{eq:HH}, and hence the diameter is smaller than
$$2\delta/(g_*\alpha_*)+\delta=(2(g_*\alpha_*)^{-1}+1)\delta.$$
 Therefore,  every $\delta$-connected component of $E_{i_1\dots i_k}$ is smaller than
a corresponding $\delta$-connected component of $H$. The lemma is proved.
 \end{proof}


\subsection{Two more lemmas}
 Let $K\subset \R^d$  be a diagonal self-affine sponge of Lalley-Gatzouras type generated by the IFS $\Phi$.

\begin{lemma}\label{lem:line}  If there exists a fiber IFS of rank $d-1$ with attractor $[0,1]$, then $K$ contains line segment.
\end{lemma}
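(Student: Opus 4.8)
The plan is to produce an explicit line segment parallel to the $d$-th coordinate axis. By hypothesis there is a fiber IFS of rank $d-1$, say $G(u)$ with $u\in\Phi_{\{1,\dots,d-1\}}$, whose attractor is $[0,1]$. Writing $\Phi_{\{1,\dots,d-1\}}=\{\varphi_j\}_{j=1}^{N'}$ as in Section~\ref{Affine-Sponge}, we have $u=\varphi_i$ for some $i\in\Sigma'$, and $G(\varphi_i)=\bF_i$ is the associated fiber IFS; note that, since $u$ has rank $d-1$, the maps of $\bF_i$ act on the \emph{last} coordinate. First I would fix the base point: because $\varphi_i$ is a single diagonal contraction of $[0,1]^{d-1}$ into itself, the nested sets $\varphi_i^{\,n}([0,1]^{d-1})$ shrink to the unique fixed point $p$ of $\varphi_i$, i.e. $\bigcap_{n\ge 1}\varphi_i^{\,n}([0,1]^{d-1})=\{p\}$. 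The target is then to show $\{p\}\times[0,1]\subset K$.

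Next I would sweep out the last coordinate using the coding map $\pi$. Fix an arbitrary $t\in[0,1]$. Since $[0,1]$ is the attractor of $\bF_i$, there is a sequence $f_1,f_2,\dots\in\bF_i$ with $\bigcap_{n\ge 1} f_1\circ\cdots\circ f_n([0,1])=\{t\}$. By the very definition of the fiber IFS $\bF_i=G(\varphi_i)$, each $f_n$ arises from a unique symbol $e_n\in\Sigma$ with $\phi_{e_n}=(\varphi_i,f_n)$, that is $\bar{\pi}_{d-1}(\phi_{e_n})=\varphi_i$ and $\varphi_{e_n,d}=f_n$. Because all the $\phi_{e_n}$ are diagonal and share the same first $d-1$ part $\varphi_i$, the level-$n$ cylinder factors as a product,
$$
\phi_{e_1\cdots e_n}([0,1]^d)=\varphi_i^{\,n}([0,1]^{d-1})\times \big(f_1\circ\cdots\circ f_n\big)([0,1]).
$$
Intersecting over $n$ and using the two limits above gives $\pi(e_1e_2\cdots)=\{p\}\times\{t\}$, so the point $(p,t)$ lies in $K$. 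As $t$ ranges over $[0,1]$ we obtain $\{p\}\times[0,1]\subset K$, the desired segment. (Equivalently, one may invoke Lemma~\ref{lem:product} with the constant word $\mathbf{i}=i^{k}$: the slab $\varphi_{i^{k}}([0,1]^{d-1})\times E_{i^{k}}$ is contained in the $k$-th approximation and converges in the Hausdorff metric to $\{p\}\times[0,1]$.)

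I do not expect a serious obstacle here; the proof is essentially bookkeeping with the coding map. The only point requiring care is verifying that the first $d-1$ coordinates of the coded point collapse to the single fixed point $p$ while the last coordinate fills all of $[0,1]$. This relies on two structural features already built into the Lalley-Gatzouras setup: the diagonal form of the maps, which lets the cylinders factor as products coordinate by coordinate, and the coordinate ordering condition, which guarantees that $\varphi_i$ genuinely contracts in each of the first $d-1$ coordinates so that $\varphi_i^{\,n}([0,1]^{d-1})$ does shrink to a point. The hypothesis that the rank is exactly $d-1$ places the surjective fiber in the last coordinate, and it is precisely the assumption that its attractor equals $[0,1]$ that makes the last coordinate sweep out a full interval rather than a Cantor-like set.
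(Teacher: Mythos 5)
Your proposal is correct and is essentially the paper's argument in pointwise form: the paper takes the sub-IFS $H=\{(\varphi_{j_0},f_j)\}_{j=1}^t\subset\Phi$ and iterates the Hutchinson operator to get $H^k([0,1]^d)=\varphi_{j_0}^k([0,1]^{d-1})\times[0,1]$, whose nested intersection is $\{x_0\}\times[0,1]\subset K$ --- the set-level version of your coding computation with the constant base word (which you yourself note in your parenthetical). One cosmetic correction: the fact that $\varphi_i^{\,n}([0,1]^{d-1})$ shrinks to the fixed point follows simply because every diagonal entry lies in $(0,1)$ by the definition of a diagonal self-affine IFS, not from the coordinate ordering condition, which only orders the ratios.
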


\begin{proof} Recall that the $(d-1)$-th major projection is
$$\Phi_{\{1,\dots, d-1\}}=\{\varphi_j\}_{j\in \Sigma'}.
$$
 where  $\Sigma'=\{1,2,\dots,N'\}$.
Suppose the fiber IFS of $\varphi_{j_0}$, which we denoted by $\{f_j\}_{j=1}^t$,  possesses attractor $[0,1]$.
Then
$h_j=(\varphi_{j_0}, f_j)\in \Phi$ for $j=1,\dots, t$.
Denote  $H=\{h_j;~1\leq j \leq t\}$, then $H$ is a subset of $\Phi$.
Notice that
$$H([0,1]^d)=\varphi_{j_0}([0,1]^{d-1})\times \bigcup_{j=1}^t f_j([0,1])=\varphi_{j_0}([0,1]^{d-1})\times [0,1].$$
By iterating the above equation, we obtain
$$H^k([0,1]^d)=\varphi^k_{j_0}([0,1]^{d-1})\times [0,1].$$
Denote $\{x_0\}=\bigcap_{k\geq 1} \varphi^k_{j_0}([0,1]^{d-1})$, obviously  $x_0\in \R^{d-1}$, then clearly $\{x_0\}\times [0,1]\subset K$. The lemma is proved.
\end{proof}

\begin{lemma}\label{lem:uni-dis}
If $K_{d-1}=\bar{\pi}_{d-1}(K)$ is uniformly  disconnected and attractors of all  fiber IFS of rank $d-1$ of $K$ are not $[0,1]$, then $K$ is uniformly disconnected.
\end{lemma}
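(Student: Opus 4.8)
The plan is to verify the criterion of Theorem~\ref{Thm:Uniform}: it suffices to produce a constant $M_0$, independent of $\delta$, such that every $\delta$-connected component $U$ of $K$ satisfies $\diam(U)\le M_0\delta$. Throughout I would work with the max-metric $\rho(x,y)=\max_{1\le i\le d}|x_i-y_i|$, which is bi-Lipschitz to the Euclidean one and hence affects neither uniform disconnectedness nor the statement. Write $q\colon\R^d\to\R$, $q(x)=x_d$, for the projection onto the last coordinate; both $\bar{\pi}_{d-1}$ and $q$ are $1$-Lipschitz for $\rho$, so they send $\delta$-chains to $\delta$-chains. Consequently, if $U\in\mathcal{C}_\delta(K)$ then $\bar{\pi}_{d-1}(U)$ lies in a single $\delta$-connected component of $K_{d-1}=\bar{\pi}_{d-1}(K)$; since $K_{d-1}$ is uniformly disconnected, Theorem~\ref{Thm:Uniform} furnishes a constant $M_1$ with $\diam(\bar{\pi}_{d-1}(U))\le M_1\delta$. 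Because in the max-metric $\diam(U)=\max\{\diam(\bar{\pi}_{d-1}(U)),\diam(q(U))\}$, everything reduces to bounding the \emph{vertical spread} $\diam(q(U))$.

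Set $S=\bar{\pi}_{d-1}(U)$, so $\diam(S)\le M_1\delta$ and $U\subseteq K\cap(S\times[0,1])$. Since $q$ maps $\delta$-chains of $K\cap(S\times[0,1])$ to $\delta$-chains of $P:=q\bigl(K\cap(S\times[0,1])\bigr)$, the set $q(U)$ lies in one $\delta$-connected component of $P$, so it is enough to bound the diameter of every $\delta$-connected component of $P$. To describe $P$, I would introduce a stopping level: for $\bi=i_1i_2\dots\in(\Sigma')^\infty$ let $r(\bi)$ be the least $r$ with $\prod_{m=1}^{r}s_{i_m}<(\alpha_*/g_*)\delta$, where $s_{i}$ denotes the contraction ratio of $\varphi_{i}$ in the $(d-1)$-th coordinate, and let $\mathcal J$ be the finite set of the resulting cylinders $\bj$ (an antichain in $(\Sigma')^{*}$) whose box $\varphi_{\bj}([0,1]^{d-1})$ meets $S$. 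Applying Lemma~\ref{lem:product} at any level $k\ge\max_{\bj\in\mathcal J}|\bj|$ and using the nestings $\varphi_{\bi}([0,1]^{d-1})\subseteq\varphi_{\bj}([0,1]^{d-1})$ and $E_{\bi}\subseteq E_{\bj}$ whenever $\bi$ extends $\bj$, one obtains $K\cap(S\times[0,1])\subseteq\bigcup_{\bj\in\mathcal J}\varphi_{\bj}([0,1]^{d-1})\times E_{\bj}$, and therefore $P\subseteq\bigcup_{\bj\in\mathcal J}E_{\bj}$, a union of pre-Moran sets.

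Each $E_{\bj}$ is a pre-Moran set built from fiber IFS of rank $d-1$, all of whose attractors are $\ne[0,1]$ by hypothesis, so the machinery of Section~\ref{Delta-compo} applies. By the coordinate ordering condition the largest vertical ratio $\beta_{j_m}$ of $\bF_{j_m}$ (a value of the form $T_{\cdot,d}$) is strictly smaller than the coordinate-$(d-1)$ ratio $s_{j_m}$, so $\prod_m\beta_{j_m}<\prod_m s_{j_m}<(\alpha_*/g_*)\delta$, i.e. $\delta\ge(g_*/\alpha_*)\prod_m\beta_{j_m}$ (and likewise for every $\delta'\ge\delta$). Hence Lemma~\ref{lem:Moran} gives the constant $C=2(g_*\alpha_*)^{-1}+1$ with $\diam(V)\le C\delta'$ for every $V\in\mathcal{C}_{\delta'}(E_{\bj})$ and every $\delta'\ge\delta$. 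Provided $|\mathcal J|\le P_0$ for some constant $P_0$ independent of $\delta$, Corollary~\ref{cor:En} applied to the family $\{E_{\bj}\}_{\bj\in\mathcal J}$ then yields $\diam(V)\le (9C)^{2^{P_0-1}}/9\cdot\delta=:M_2\delta$ for every $V\in\mathcal{C}_\delta\bigl(\bigcup_{\bj}E_{\bj}\bigr)$, hence for every $\delta$-connected component of $P$. Combining, $\diam(U)\le\max\{M_1,M_2\}\delta=:M_0\delta$, and Theorem~\ref{Thm:Uniform} completes the proof. (Conversely, if some rank-$(d-1)$ fiber had attractor $[0,1]$, Lemma~\ref{lem:line} would place a segment in $K$, so hypothesis~(iii) is exactly what makes Lemma~\ref{lem:Moran} available.)

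The crux, and the step I expect to be the main obstacle, is the uniform cardinality bound $|\mathcal J|\le P_0$: Corollary~\ref{cor:En} is worthless if the number of pieces grows with $\delta$, since its constant is doubly exponential in the number of sets. This is precisely where the two hypotheses interact. Every cylinder in $\mathcal J$ meets $S$, whose diameter is $\le M_1\delta$ thanks to the uniform disconnectedness of $K_{d-1}$; and by the choice of $r(\cdot)$ every box $\varphi_{\bj}([0,1]^{d-1})$ has all of its side lengths bounded below by a fixed positive multiple of $\delta$, while the neat projection condition makes the boxes $\{\varphi_{\bj}([0,1]^{d-1})\}_{\bj\in\mathcal J}$ interior-disjoint. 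A packing estimate for interior-disjoint boxes of comparable (though possibly very elongated) size that all meet a set of diameter $\le M_1\delta$ then bounds $|\mathcal J|$ by a constant depending only on $d$, $M_1$, $\alpha_*$ and $g_*$. Making this packing bound clean despite the strong anisotropy of the cylinders is the only genuinely delicate point; the remainder is bookkeeping with the tools already in place.
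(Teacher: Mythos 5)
Your proposal is correct, and it shares the paper's skeleton: horizontal control via uniform disconnectedness of $K_{d-1}$, a decomposition of the fibers over the relevant horizontal piece into boundedly many pre-Moran sets $E_{\bj}$ coming from the rank-$(d-1)$ fiber IFS (Lemma~\ref{lem:product}), and then Lemma~\ref{lem:Moran} together with Corollary~\ref{cor:En} to control the vertical spread. You diverge at exactly the two technical junctures, in both cases in a defensible and arguably cleaner way. First, the counting. The paper covers the horizontal component with $\delta$-approximate squares, passes to $\epsilon$-connected components of their union with $\epsilon=\sqrt{d}\,\delta$, and asserts (rather tersely) that uniform disconnectedness of $K_{d-1}$ forces a uniform bound $M_1$ on the number of squares per component; you instead use a stopping-time antichain of projected cylinders whose smallest side is comparable to $\delta$, and bound $|\mathcal J|$ by a packing estimate. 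Your flagged ``delicate point'' does go through: each stopped box has all sides at least $c\delta$ with $c=s_{\min}\alpha_*/g_*$ (by minimality of the stopping time and the coordinate ordering), so each contains a sub-box of side exactly $c\delta$ with a vertex at its point of $S$; these sub-boxes are interior-disjoint by the neat projection condition and lie in a cube of side $2(M_1+c)\delta$, giving $|\mathcal J|\leq \left(2(M_1+c)/c\right)^{d-1}$. This is more explicit than the paper's $M_1<\infty$ claim, which implicitly rests on the same kind of volume argument for approximate squares. Second, the threshold for Lemma~\ref{lem:Moran}: the paper stops cylinders at width $<\delta$ and then needs the limiting statement \eqref{eq:width} to produce a $\delta_0$ below which $\prod\beta\leq (\alpha_*/g_*)\delta$, patching the range $\delta\leq\delta'\leq\delta_0$ by enlarging the constant $C$; you build the factor $\alpha_*/g_*$ into the stopping rule and invoke the strict ordering $T_{k,d}<T_{k,d-1}$ to get $\prod_m\beta_{j_m}<\prod_m s_{j_m}<(\alpha_*/g_*)\delta$ directly, uniformly in $\delta$, eliminating the $\delta_0$ step entirely. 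Two small points you should make explicit in a final write-up: $g_*>0$ requires observing that a simple IFS with attractor $\neq[0,1]$ has $\bF_j([0,1])\subsetneq[0,1]$ and hence $L_j<1$ (a closed subset of $[0,1]$ of full measure equals $[0,1]$), and the stopping time is finite and uniformly bounded for each fixed $\delta$ since all contraction ratios are bounded away from $1$; both are immediate.
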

\begin{proof}
Let ${\mathbf Q}_{\delta}$ be the union of all $\delta$-approximate squares of $K_{d-1}$. Set $\epsilon=\sqrt{d}\delta.$
Let us consider the $\epsilon$-connected components of ${\mathbf Q}_{\delta}$.

 For any $U\in \mathcal{C}_{\epsilon}({\mathbf Q}_{\delta})$, we introduce the notation $\# U $ to count the number of $\delta$-approximate squares in ${\mathbf Q}_{\delta}$ contained in $U$.  Denote
\begin{equation*}
M_1=\sup_{\delta>0} \max_{U\in \mathcal{C}_{\epsilon}({\mathbf Q}_\delta)} \#U.
\end{equation*}
Since $K_{d-1}$ is uniformly disconnected, each $\epsilon$-connected component  intersects only finitely many $\delta$-approximate squares.
Therefore $M_1< \infty$.

Pick $W\in \mathcal{C}_{\delta}(K)$.  We will estimate the diameter of $W$ by considering the product of $\delta$-connected components of $K_{d-1}$ and a pre-Moran set.
Notice that  $\pi_{d-1}(W)$ is $\delta$-connected in $K_{d-1}$.
Let $U$ be the element in
$ \mathcal{C}_{\epsilon}({\mathbf Q}_\delta)$ containing $\pi_{d-1}(W)$, then
$W\subset U\times [0,1]$.
 Since $U\in\mathcal{C}_{\epsilon}({\mathbf Q}_\delta)$,  $U$ is the union of $\delta$-approximate squares,\emph{~i.e.}
\begin{equation}\label{equ:U}
U=\bigcup_{j=1}^p Q(\bi^{(j)},\delta),
\end{equation}
where $Q(\bi^{(j)},\delta)$ are $\delta$-approximate squares of $K_{d-1}$  and $\bi^{(j)}\in (\Sigma')^{\infty}$.
For simplicity, denote $Q(\bi^{(j)},\delta)$ by $Q_j$.

Let $t_j$ be the smallest integer such that the width (the smallest side)  of the cylinder of $\varphi_{\bi^{(j)}_1\dots \bi^{(j)}_{t_j}}(K_{d-1}):=T_j$
is smaller than $\delta$, then we have $Q_j\subset T_j$. If $H$ is a cylinder of $K$ of  level $t_j$  such that $\bar\pi_{d-1}(H)=T_j$,
then by the coordinate ordering condition, we have
\begin{equation}\label{eq:width}
\frac{S(H)}{S(T_j)}\to 0,
\end{equation}
 as $\delta \to 0$.

Let $\bF_k$ be the  fiber IFS  related to   $\varphi_k\in \Phi_{\{1,\dots,d-1\}}$   for ${k\in \Sigma'}$.
Then we have a family of simple IFS of $[0,1]$, which we denote by ${\mathcal F}={ \{\bF_k;~k\in \Sigma'\}}$. 

  For the chosen $\bi^{(j)}\in {(\Sigma')^{\infty}}$ in equation \eqref{equ:U}, set
$$
E_j:=E_{\bi^{(j)}_1\dots \bi^{(j)}_{t_j}}= F_{\bi^{(j)}_1}\circ\dots\circ F_{\bi^{(j)}_{t_j}}([0,1]),
$$
to be the pre-Moran set  defined by \eqref{eq:E-gamma}. Then
$$W\subset(U\times [0,1])\cap K \subset \bigcup_{j=1}^p (Q_j\times E_j)\subset  U\times \bigcup_{j=1}^pE_j.$$

 Let $\alpha_*, \beta^*$ and $g_*$ be designed for ${\mathcal F}$ as we did in Subsection \ref{Delta-compo},  here we emphasis that $g_*>0$ since all fiber IFS of rank $d-1$ are not $[0,1]$ by assumption.
By \eqref{eq:width},    there exists $\delta_0>0$ such that if $\delta\leq \delta_0$, then
$$
\prod_{\ell=1}^{t_j}\beta_{\bi^{(j)}_\ell}\leq S(T_j)\alpha_*/ g_*  \leq \delta \alpha_*/ g_*.
$$

Let $\delta\leq \delta'\leq \delta_0$,   then $\delta'\geq\frac{g_*}{\alpha_*}\prod_{\ell=1}^{t_j}\beta_{\bi^{(j)}_\ell}$. By Lemma \ref{lem:Moran},  for  every $E_j=E_{\bi^{(j)}_1\dots \bi^{(j)}_{t_j}}$,  if $V$ is a $\delta'$-connected component of $E_j$,
then
\begin{equation}\label{eq:sizeV}
 \diam(V) \leq  C\delta'
\end{equation}
where $C=2(g_*\alpha_*)^{-1}+1$. If we set
$C=\max\{2(g_*\alpha_*)^{-1}+1, 1/\delta_0\}$, then
\eqref{eq:sizeV} holds for all $\delta'\geq \delta$.

Finally,  by Corollary \ref{cor:En}, if $V$ is a $\delta$-connected component of $\bigcup_{j=1}^pE_j$, then
$$
 \diam(V) \leq (9C^2)^{2^{p-1}}/9\delta:=C_1\delta.
$$
So
$$
\diam(W)\leq \sqrt{\diam(U)^2+C_1^2\delta^2}\leq \delta\sqrt{M_1^2d+C_1^2},
$$
which implies that $K$ is uniformly disconnected.
\end{proof}

\subsection{Proof of Theorem \ref{thm:Lalley}}Now we are in position to prove Theorem \ref{thm:Lalley} which is to show the following three statements are equivalent.

(i) $K$ is uniformly disconnected.

(ii) All $\bar{\pi}_\ell(K), 1\le \ell \le d,$  are totally disconnected.

(iii) The attractors of all fiber IFS of $K$ are not $[0,1]$.

\begin{proof}[\textbf{Proof of Theorem \ref{thm:Lalley}}]
Let $\Phi=\{\phi_i\}_{i\in \Sigma}$ be an IFS of Lalley-Gatzouras type and let $K$ be its attractor.

We prove this theorem by induction on $d$.

Clearly, when $d=1$, the theorem holds.

Now let us assume that the theorem holds for self-affine  sponge of Lalley-Gatzouras type in $\R^{d'}$
with $d'<d$.

(iii)$\Rightarrow$ (i).

By induction hypothesis, we have that $K_{d-1}=\pi_{d-1}(K)$ is  uniformly disconnected. By assumption (iii) that attractors of all fiber IFS of $K$ are not $[0,1]$, then $K$ is uniformly disconnected by Lemma \ref{lem:uni-dis}.

\medskip

(i)$\Rightarrow$(iii). Suppose on the contrary there exists a fiber IFS   possessing attractor $[0,1]$.
That $K$ is uniformly disconnected implies that $K_{d-1}$ is uniformly disconnected, so  by the induction hypothesis,     the attractors of all fiber IFS of $K_{d-1}$ are not   $[0,1]$.
 Therefore,  there exists a fiber IFS of rank $d-1$ possessing attractor $[0,1]$. Now
by Lemma \ref{lem:line},
$K$ contains line segment of the form $\{x_0\}\times[0,1]$ where $x_0\in K_{d-1}$, which is a contradiction.

\medskip

(iii)$\Rightarrow$ (ii).  By induction hypothesis, we have that all $K_{j}$, $j<d$, are totally disconnected.
Since (iii)$\Rightarrow$ (i), so $K_d=K$ is uniformly disconnected, and thus totally disconnected.

\medskip

(ii)$\Rightarrow$(iii). First, by the induction hypothesis, all fiber IFS of rank less than $d-1$ do not
possessing attractor $[0,1]$.
 If there exists a fiber IFS  of rank $d-1$  possessing attractor $[0,1]$, then $K$ contains line segment, which  contradicts
  that $K$ is totally disconnected.


The theorem is proved.
\end{proof}

\section{\bf{Proof of Theorem \ref{thm:dim0}}}\label{proof-main-2}
This section contributes to prove Theorem \ref{thm:simple-IFS}, and thus complete the proof of  Theorem  \ref{thm:dim0}.

\subsection{Binary-tree Cantor set}
Let us first recall the $\{c_i\}$-thick set defined in \cite{Staples-Ward-1998}.  Denote $\Sigma=\{0,1\}$. Let $\Sigma^{0}={\emptyset}$,  $\Sigma^n=\{0,1\}^n$ and    $\Sigma^*=\cup_{n\geq 0}\Sigma^n$. If $\sigma=\sigma_1\dots\sigma_k$ and $\tau=\tau_1\dots\tau_j$ then $\sigma*\tau=\sigma_1\dots\sigma_k\tau_1\dots\tau_j\in \Sigma^{k+j}$.
Let $|\sigma|$ denote the  length of $\sigma\in \Sigma^*$. 

\begin{defi}\rm  Let
$\{J_\sigma;~\sigma\in \Sigma^*\}$
be a family of closed intervals in $\R$ satisfying the following conditions:

(i) $J_\emptyset=[a,b]\subset \R$;

(ii) For each $\sigma\in \Sigma^*$, $J_{\sigma0}$ and $J_{\sigma1}$
are non-overlapping subintervals of $J_\sigma$ such that  $J_{\sigma0}$ is  located on  the left side of $J_{\sigma1}$,
and $J_\sigma \setminus (J_{\sigma0}\cup J_{\sigma1})$ is an open interval
or is the empty set.

We call
$$
E=\bigcap_{n\geq 0}\bigcup_{\sigma\in \Sigma^n} J_\sigma
$$
a \emph{binary-tree Cantor set} if $E$ is totally disconnected.
\end{defi}

 Let $\{c_n\}_{n\geq 1}$ be a sequence of real numbers such that
$0\leq c_n<1$. We say a binary-tree Cantor set $E$ a \emph{$\{c_n\}$-thick set}, if
for any $\sigma\in \Sigma^*$,
$J_\sigma \setminus (J_{\sigma0}\cup J_{\sigma1})$  has length no larger than $c_m |J_{\sigma}|$ where $m=|\sigma|$.

Suppose that  $T\geq 1$ and $E$ is a  $\{c_i\}$-thick set.  $E$ is said to be \emph{$T$-balanced}, if
$$\frac{1}{T}\leq \frac{|J_{\sigma0}|}{|J_{\sigma 1}|}\leq T, \quad \forall \sigma\in \Sigma^*.$$

\begin{lemma}\label{lem:binarytree} Let $E$ be a  binary-tree Cantor set which is $T$-balance. If
$$
\inf_{|\sigma|=n} \frac{\min\{|J_{\sigma 0}|, |J_{\sigma 1}|\}} {|J_\sigma|-|J_{\sigma 0}|-|J_{\sigma 1}|}\to +\infty,
$$
then $\dim_C E=1.$
\end{lemma}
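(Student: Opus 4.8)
The plan is to reduce the statement to a theorem of Hakobyan \cite{Hakobyan_2010}, which guarantees that a binary Cantor set of this type is minimal (so $\dim_C=1$) once it has bounded geometry and its relative gaps tend to zero. The hypotheses of the lemma are arranged precisely so that the displayed ratio condition encodes the vanishing of the relative gaps, while the $T$-balance supplies the bounded geometry; accordingly the substance of the proof lies in translating the two hypotheses into Hakobyan's set-up and in reducing to the situation where they hold uniformly from the root.

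First I would rewrite the hypothesis in a scale-invariant form. Put $g_\sigma=|J_\sigma|-|J_{\sigma0}|-|J_{\sigma1}|$ and $c_\sigma=g_\sigma/|J_\sigma|$. The $T$-balance gives $\min\{|J_{\sigma0}|,|J_{\sigma1}|\}\ge \tfrac{1}{T+1}(|J_\sigma|-g_\sigma)$, so that
\begin{equation*}
\frac{\min\{|J_{\sigma0}|,|J_{\sigma1}|\}}{g_\sigma}\asymp\frac{1}{c_\sigma}
\end{equation*}
with constants depending only on $T$. Hence the hypothesis $\lambda_n:=\inf_{|\sigma|=n}\min\{|J_{\sigma0}|,|J_{\sigma1}|\}/g_\sigma\to+\infty$ is equivalent to $\sup_{|\sigma|=n}c_\sigma\to0$, i.e.\ the relative gaps vanish uniformly along the tree. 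In particular there is an $N$ so that $c_\sigma<\tfrac12$ whenever $|\sigma|\ge N$, and then $T$-balance forces $|J_{\sigma i}|/|J_\sigma|\ge \tfrac{1}{2(T+1)}$ for $i\in\{0,1\}$; this two-sided control of the child ratios is the bounded-geometry (uniform perfectness) input needed by Hakobyan's theorem.

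Next I would strip off the finitely many top levels where the gaps need not be small. Fix $\sigma_0$ with $|\sigma_0|=N$ and set $E_{\sigma_0}=E\cap J_{\sigma_0}$. The affine rescaling of $J_{\sigma_0}$ onto $[0,1]$ is bi-Lipschitz, hence preserves $\dim_H$ and $\dim_C$; its image $\tilde E$ is again a $T$-balanced binary-tree Cantor set, and because the defining ratios are scale-invariant, its level-$m$ ratios are exactly those of $E$ below $\sigma_0$. Since an infimum over the subtree dominates the global one, $\tilde\lambda_m\ge\lambda_{N+m}\to+\infty$, so $\tilde E$ satisfies the vanishing-gap and bounded-geometry hypotheses uniformly from its root. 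Applying Hakobyan's theorem to $\tilde E$ gives $\dim_C\tilde E=1$, whence $\dim_C E_{\sigma_0}=1$. By monotonicity of conformal dimension under inclusion (for $F\subset E$ and any $f\in\mathcal{QS}(E)$ one has $\dim_H f(E)\ge\dim_H f(F)\ge\dim_C F$, so $\dim_C E\ge\dim_C F$), I conclude $\dim_C E\ge\dim_C E_{\sigma_0}=1$. Since $E\subset\R$ gives $\dim_C E\le\dim_H E\le1$, this yields $\dim_C E=1$.

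The step I expect to be the main obstacle is the faithful matching with Hakobyan's theorem, which is most naturally stated for middle-interval Cantor sets with equal children, whereas here the two children are only comparable up to the factor $T$. I would either verify that Hakobyan's modulus lower bound goes through verbatim under $T$-balance—his estimates should only use two-sided ratio bounds on the children together with $c_\sigma\to0$—or else construct an explicit bi-Lipschitz homeomorphism of $\R$ carrying $\tilde E$ onto an equal-children middle-interval set whose relative gaps still tend to zero, allowing Hakobyan's result to be quoted as a black box. Checking that such a map is bi-Lipschitz (so that it preserves $\dim_C$) while keeping the relative gaps vanishing is the one genuinely technical point; the remainder is bookkeeping with the scale-invariant quantities $c_\sigma$ and the child ratios.
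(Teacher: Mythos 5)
Your proposal is correct and follows essentially the same route as the paper, whose entire proof is the single sentence that the lemma is a direct consequence of Theorem 3.2 of Hakobyan \cite{Hakobyan_2010}. The translation you carry out (converting the displayed hypothesis into the vanishing of the relative gaps $c_\sigma$, noting the $T$-balance as bounded geometry) is exactly what the citation silently absorbs, and your worry about equal children is unfounded since Hakobyan's theorem is already formulated for $T$-balanced $\{c_n\}$-thick sets in the sense recalled just before the lemma, so the top-level stripping and rescaling steps are unnecessary.
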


\begin{proof} This is a direct consequence of Theorem 3.2 in Hakobyan \cite{Hakobyan_2010}.
\end{proof}

\subsection{A special IFS of Lalley-Gatzouras type}\label{spe_IFS}
 Let $\Phi=\{\phi_j\}_{j=0}^{m-1}$ be an IFS of Lalley-Gatzouras type
with attractor $\bar{K}$.
  As before, we denote
  $$\phi_j=(\varphi_{j,1}, \dots, \varphi_{j,d}),$$
   and the derivative $\varphi'_{j,i}$ is  the contraction ratio of $\varphi_{j,i}$ and  $\bar{K}_j=\bar{\pi}_j(\bar{K})$ for $j\in \{0,1,\dots,m-1\}$.
Recall that $V= \{\emptyset\}\cup\{V_j; 1\leq j \leq d-1\}$ is the vertex set of the label tree corresponding to $\Phi$.
 Moreover,   for $u\in V$,  the fiber IFS related to $u$ denoted by $G(u)$. see Subsection \ref{subsec:tree}.

We consider a special class of IFS $\Phi=\{\phi_j\}_{j=0}^{m-1}$ of Lalley-Gatzouras type satisfying the following conditions:
\begin{itemize}

 \item[(i)]   the fiber IFS  $G(\emptyset)=V_1$  has attractor $[0,1]$;

 \item[(ii)]  each vertex in  $V_j, 1\leq j\leq d-1$, has only one offspring.
\end{itemize}

Then by (i) the fiber IFS $G(\emptyset)$ has $m$ elements and they are exactly the first coordinate of $\phi_j$, \textit{i.e.},
$G(\emptyset)=(\varphi_{j,1})_{j=0}^{m-1},$
and
\begin{equation}\label{eq:partition}
[0,1]=\bigcup_{j=0}^{m-1}\varphi_{j,1}([0,1])
\end{equation}
is a non-overlapping union, which implies
\begin{equation}\label{eq:ratio}
\sum_{j=0}^{m-1}\varphi^{'}_{j,1}=1.
\end{equation}
From now on, we always assume that
 $\varphi_{j,1}([0,1])$, $0\leq j\leq m-1$, are subintervals of $[0,1]$ from left to right (and $\phi_j$ are also ordered accordingly).

Let $\ba$ be the fixed point of $\phi_{0}$ and $\bb$ be the fixed point of $\phi_{m-1}$. We will regard $\ba$ as the origin
of $\bar{K}$ and regard $\bb$ as the terminus of $\bar{K}$. Moreover,
set
$$\ba_j=\phi_j(\ba), \quad \bb_j=\phi_j(\bb), \quad j=0,1, \dots, m-1.$$
(Clearly $\ba_0=\ba$ and $\bb_{m-1}=\bb$.) We regard $\ba_j$ and $\bb_j$ as origin and terminus of $\bar{K}_j$ respectively.

For  $1\leq j\leq m-1$, denote $\Delta_j={\ba}_j-\bb_{j-1}$, and let $\tau_j$ be the smallest  integer
in $\{1,\dots, d\}$
such that the $(\tau_j)$-th coordinate of $\Delta_j$ is not zero.
If $\Delta_j={\mathbf 0}$, we define $\tau_j=0$ and in this case we  set $\varphi_{\sigma,\tau_j}'=0$ for convention.
  Clearly $\tau_j=0$ or $\tau_j \geq 2$ since the first coordinate of $\Delta_j$ is $0$ by \eqref{eq:partition}.

Let us denote the $j$-th coordinate  projection in $\R^d$ by $\pi_j(x_1,\dots, x_d)=x_j$.   Set $\Omega=\{0,1,\dots,m-1\}$ and $\Omega^*=\cup_{n\geq 0}~\Omega^n$. For $\omega=\omega_1\dots\omega_k\in \Omega^*$, we set $\varphi'_{\omega,j}=\varphi'_{\omega_1,j}\cdot\varphi'_{\omega_2,j}\cdot \dots \cdot \varphi'_{\omega_k,j}$.

\begin{lemma}\label{lem:c0c1}  There exist constants $0<c_0<c_1<\infty$ such that for
$\omega\in \Omega^*\setminus\{\emptyset\}$, it holds that

(i) $c_0\varphi_{\omega,1}'\leq |\phi_\omega(\ba)-\phi_\omega(\bb)|\leq c_1 \varphi_{\omega,1}'.$

(ii) $c_0\varphi_{\omega,\tau_j}' \leq| \phi_\omega(\ba_j)-\phi_\omega(\bb_{j-1})|\leq c_1 \varphi_{\omega,\tau_j}'$ for $1\leq j \leq m-1$.
 \end{lemma}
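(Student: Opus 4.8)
The plan is to prove both inequalities simultaneously by exploiting the fact that $\phi_\omega$ is a diagonal affine map, so that the coordinates of the two image points separate cleanly and we only need to track the single coordinate where the difference first becomes nonzero. First I would fix notation: for a word $\omega = \omega_1\cdots\omega_k$, the map $\phi_\omega$ is diagonal, so for any two points $\bp, \bq \in [0,1]^d$ the $i$-th coordinate of $\phi_\omega(\bp) - \phi_\omega(\bq)$ equals $\varphi'_{\omega,i}\,(\pi_i(\bp) - \pi_i(\bq))$, because the translation parts cancel in the difference and the diagonal entry of the linear part of $\phi_\omega$ along coordinate $i$ is exactly $\varphi'_{\omega,i}$. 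Thus the Euclidean distance $|\phi_\omega(\bp) - \phi_\omega(\bq)|$ is a diagonal-weighted version of the original displacement $\bp - \bq$.

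For part (i), take $\bp = \ba$ and $\bq = \bb$. Since $\ba$ is the fixed point of $\phi_0$ and $\bb$ that of $\phi_{m-1}$, and since $\varphi_{0,1}([0,1])$ is the leftmost and $\varphi_{m-1,1}([0,1])$ the rightmost subinterval by the ordering convention in \eqref{eq:partition}, the first coordinates of $\ba$ and $\bb$ differ: $\pi_1(\ba) \neq \pi_1(\bb)$. Write $c := |\pi_1(\ba) - \pi_1(\bb)| > 0$. The coordinate ordering condition \eqref{eq:ordering} gives $\varphi'_{\omega,1} \geq \varphi'_{\omega,i}$ for every $i$, so the full vector $\phi_\omega(\ba) - \phi_\omega(\bb)$ has all its coordinates bounded in absolute value by $\varphi'_{\omega,1}\cdot\operatorname{diam}([0,1]^d) = \sqrt{d}\,\varphi'_{\omega,1}$, while its first coordinate alone has absolute value $c\,\varphi'_{\omega,1}$. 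This yields the two-sided bound with $c_0 = c$ and $c_1 = \sqrt{d}$ (or $c_1 = c\sqrt{d}$ after rescaling), and crucially these constants do not depend on $\omega$.

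For part (ii), I would apply the same diagonal decomposition to $\bp = \ba_j$ and $\bq = \bb_{j-1}$, whose displacement is $\Delta_j = \ba_j - \bb_{j-1}$. By definition $\tau_j$ is the smallest index at which $\Delta_j$ is nonzero, so $\pi_{\tau_j}(\Delta_j) \neq 0$ and $\pi_i(\Delta_j) = 0$ for $i < \tau_j$. Setting $c^{(j)} := |\pi_{\tau_j}(\Delta_j)| > 0$, the $\tau_j$-th coordinate of $\phi_\omega(\ba_j) - \phi_\omega(\bb_{j-1})$ equals $c^{(j)}\varphi'_{\omega,\tau_j}$ in absolute value, giving the lower bound. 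For the upper bound I again invoke the ordering condition, but now in the sharpened form that all coordinates $i \geq \tau_j$ satisfy $\varphi'_{\omega,i} \leq \varphi'_{\omega,\tau_j}$, while coordinates $i < \tau_j$ contribute nothing since $\pi_i(\Delta_j) = 0$; hence every nonzero coordinate of the displacement vector is bounded by $\varphi'_{\omega,\tau_j}$ times a fixed quantity, producing $|\phi_\omega(\ba_j) - \phi_\omega(\bb_{j-1})| \leq c_1\varphi'_{\omega,\tau_j}$. The degenerate case $\Delta_j = \mathbf 0$ (where $\tau_j = 0$ and $\varphi'_{\omega,\tau_j} = 0$ by convention) is consistent since both sides vanish.

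The one genuine subtlety — and the step I expect to require the most care — is making the constants $c_0, c_1$ uniform over all $j \in \{1,\dots,m-1\}$ simultaneously, not just over $\omega$. Since there are only finitely many indices $j$, I would simply take $c_0 = \min_j c^{(j)}$ over those $j$ with $\Delta_j \neq \mathbf 0$ and $c_1 = \sqrt{d}\max_j c^{(j)}$, both positive and finite; combining with part (i) by a further minimum/maximum gives a single pair of constants working for both statements. The use of the \emph{smallest} index $\tau_j$ in the upper bound is exactly what makes the ordering condition applicable, so I would emphasize that point rather than gloss over it.
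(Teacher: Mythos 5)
Your proof is correct and takes essentially the same route as the paper's: the identical diagonal decomposition $\pi_i(\phi_\omega(\bp)-\phi_\omega(\bq))=\varphi'_{\omega,i}\,\pi_i(\bp-\bq)$, the lower bound read off the first nonvanishing coordinate ($i=1$ in (i), $i=\tau_j$ in (ii)), the upper bound from the coordinate ordering condition on indices $i\geq\tau_j$, the convention disposing of $\Delta_j=\mathbf{0}$, and a finite min/max over $j$ for uniformity (the paper takes $c_0=|\pi_1(\ba-\bb)|\wedge\min\{|\pi_{\tau_j}(\ba_j-\bb_{j-1})|;\,\tau_j\geq 2\}$ and $c_1=d\cdot|\ba-\bb|$). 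The one slip is your final explicit constant $c_1=\sqrt{d}\,\max_j c^{(j)}$: coordinates $i>\tau_j$ of $\Delta_j$ may well exceed $c^{(j)}=|\pi_{\tau_j}(\Delta_j)|$, so this quantity does not dominate the upper bound in (ii), but your own in-text estimate (every coordinate of $\Delta_j$ is at most $1$ since all points lie in the unit cube) already gives the valid choice $c_1=\sqrt{d}$, so the fix is a one-line replacement rather than a gap.
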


 \begin{proof} Let    $c_1=d\cdot|\ba-\bb|$ and
 $$c_0=|\pi_1(\ba-\bb)| \wedge  \min\{|\pi_{\tau_j}(\ba_j-\bb_{j-1})|; 1\leq j\leq m-1, \tau_j\geq 2\}.$$
 
 (i) Notice that
 $\phi_\omega(\ba)-\phi_\omega(\bb)=\diag(\varphi_{\omega,1}',\dots, \varphi_{\omega,d}')(\ba-\bb)$, so
$$ |\phi_\omega(\ba)-\phi_\omega(\bb)|\geq \varphi_{\omega,1}'|\pi_1(\ba-\bb)|,$$
so the first inequality in item (i) holds.
 On the other hand,
 $$ |\phi_\omega(\ba)-\phi_\omega(\bb)|\leq \sum_{j=1}^d \varphi_{\omega,j}'|\pi_j(\ba-\bb)|\leq \varphi_{\omega,1}' \cdot d\cdot|\bb-\ba|, $$
 and the second inequality in item (ii) holds.

 (ii) If $\ba_j=\bb_{j-1}$, item (ii) of the lemma  is  true by the convention $\varphi_{\omega,0}'=0$. Suppose $\ba_j\neq \bb_{j-1}$ which means $\tau_j\geq 2$. By the same argument as above,  we have
 $$|\phi_\omega(\ba_j)-\phi_\omega(\bb_{j-1})|\geq \varphi_{\omega,\tau_j}'|\pi_{\tau_j}(\ba_j-\bb_{j-1})|\geq  \varphi_{\omega,\tau_j}' c_0 ,
 $$
 and
 $$ |\phi_\omega(\ba_j)-\phi_\omega(\bb_{j-1})|\leq \sum_{i=\tau_j}^d \varphi_{\omega,i}'|\pi_i(\ba_j-\bb_{j-1})|\leq d\cdot\varphi_{\omega,\tau_j}' \cdot |\ba_{j}-\bb_{j-1}|\leq \varphi_{\omega,\tau_j}'c_1.$$
 The lemma is proved.
 \end{proof}

\subsection{An $m$-tree Cantor set}

   Now we construct a  $m$-tree Cantor  set $E$ tailored  for the above special IFS $\Phi$ such that $E$ is bi-Litchis equivalent to $\bar{ K}$.

   Let
\begin{equation}\label{eq:L}
L=1+\sum_{\beta\in \Omega^*} \sum_{j=1}^{m-1}  \varphi_{\beta,\tau_j}',
\end{equation}
 where $\tau_j$ is defined in Subsection \ref{spe_IFS},
  we set $\varphi_{\emptyset,\tau_j}'=\Delta_j$ for convention.

We construct a $m$-branch-tree Cantor set as follows.

 (i) \textbf{Initial interval}: we   set the initial interval to be $J_\emptyset=[0, L]$;

 (ii) \textbf{Nested structure}:   For $\omega\in \Omega^*$,  $J_{\omega j}$, $j=0,1,\dots, m-1$, are non-overlapping  closed sub-intervals of $J_\omega$ located from left to right.
 Moreover, we require the left end point of $J_{\omega 0}$ coincide with that of  $J_\omega$
   and the right end point of $J_{\omega (m-1)}$ coincide with that of  $J_\omega$.

 (iii) \textbf{Length of cylinders}: For $\omega\in \Omega^*$, we set
 \begin{equation}\label{eq:cylinder-length}
 |J_\omega|=\varphi_{\omega,1}'+\sum_{\beta\in \Omega^*} \sum_{j=1}^{m-1}  \varphi_{\omega\beta,\tau_j}'.
 \end{equation}
 (We remark that the above formula holds for $\omega=\emptyset$  if we set $\varphi'_{\emptyset,1}=1$.)

 (iv) \textbf{Placements of cylinders:}  For $j\in \{1,\dots,m-1\}$ and $\omega\in \Omega^*$, we set the gap between $J_{\omega(j-1)}$ and $J_{\omega j}$ to be
 $ \varphi_{\omega,\tau_j}':=g_{\omega,j}$.

 \begin{lemma} It holds that
 $$
 |J_\omega|=\sum_{j=0}^{m-1}|J_{\omega j}|+\sum_{j=1}^{m-1}  g_{\omega,j}.
 $$
 \end{lemma}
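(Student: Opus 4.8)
The plan is to verify the identity by writing out the length formula \eqref{eq:cylinder-length} for each of the $m$ children $J_{\omega 0},\dots,J_{\omega(m-1)}$ and summing. Applying \eqref{eq:cylinder-length} to $J_{\omega k}$ gives
$$|J_{\omega k}| = \varphi_{\omega k,1}' + \sum_{\beta\in\Omega^*}\sum_{j=1}^{m-1}\varphi_{\omega k\beta,\tau_j}',$$
so that summing over $k\in\Omega$ separates the computation into two pieces: the leading terms $\sum_{k=0}^{m-1}\varphi_{\omega k,1}'$ and the double sums $\sum_{k=0}^{m-1}\sum_{\beta\in\Omega^*}\sum_{j=1}^{m-1}\varphi_{\omega k\beta,\tau_j}'$. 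I would treat these two pieces in turn and then compare with the expression \eqref{eq:cylinder-length} for $|J_\omega|$ itself.

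First I would handle the leading terms using the multiplicativity of the contraction ratios along a word, namely $\varphi_{\omega k,1}'=\varphi_{\omega,1}'\,\varphi_{k,1}'$. Since $\sum_{k=0}^{m-1}\varphi_{k,1}'=1$ by the partition identity \eqref{eq:ratio}, it follows that $\sum_{k=0}^{m-1}\varphi_{\omega k,1}'=\varphi_{\omega,1}'$, which is exactly the leading term of $|J_\omega|$ in \eqref{eq:cylinder-length}. This is the one place where the normalization $\sum_{j}\varphi_{j,1}'=1$ is genuinely used.

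Next I would reindex the double sums. The key observation is the bijection $\Omega^*\setminus\{\emptyset\}=\bigsqcup_{k=0}^{m-1}k\,\Omega^*$: every nonempty word over $\Omega$ factors uniquely as its first letter $k$ followed by a (possibly empty) word $\beta$. Hence
$$\sum_{k=0}^{m-1}\sum_{\beta\in\Omega^*}\sum_{j=1}^{m-1}\varphi_{\omega k\beta,\tau_j}'=\sum_{\beta\in\Omega^*\setminus\{\emptyset\}}\sum_{j=1}^{m-1}\varphi_{\omega\beta,\tau_j}',$$
which is precisely the double sum appearing in $|J_\omega|$ with the single index $\beta=\emptyset$ removed. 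That removed term equals $\sum_{j=1}^{m-1}\varphi_{\omega,\tau_j}'=\sum_{j=1}^{m-1}g_{\omega,j}$ by the gap definition in step (iv). Combining the two pieces gives $\sum_{k=0}^{m-1}|J_{\omega k}|=|J_\omega|-\sum_{j=1}^{m-1}g_{\omega,j}$, which rearranges to the claim.

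I do not expect a serious analytic obstacle: the statement is a bookkeeping identity whose whole content is the combinatorial reindexing of the infinite double sum together with \eqref{eq:ratio}. The only point requiring a word of care is the manipulation of the infinite sums over $\beta\in\Omega^*$; here I would simply note that all summands are nonnegative and that each $|J_\omega|$ is finite, being dominated by $L$ from \eqref{eq:L}, so that the splitting off of the $\beta=\emptyset$ term and the term-by-term reindexing are legitimate. One should also record the boundary convention at $\omega=\emptyset$ (taking $\varphi_{\emptyset,1}'=1$) so the formula reads uniformly, but this does not affect the argument.
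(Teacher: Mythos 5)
Your proposal is correct and matches the paper's own proof essentially line for line: sum \eqref{eq:cylinder-length} over the children, collapse the leading terms via $\sum_{k}\varphi_{k,1}'=1$ from \eqref{eq:ratio}, and reindex the double sum through the factorization $\Omega^*\setminus\{\emptyset\}=\bigcup_{k=0}^{m-1}k\,\Omega^*$, with the $\beta=\emptyset$ term supplying the gaps $g_{\omega,j}=\varphi_{\omega,\tau_j}'$. Your added remarks on absolute convergence (domination by $L$) and the $\omega=\emptyset$ convention are harmless refinements the paper leaves implicit.
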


 \begin{proof}
 By \eqref{eq:cylinder-length}, we have
 $$\sum_{i=0}^{m-1}|J_{\omega i}|=\sum_{i=0}^{m-1}\varphi'_{\omega i,1}+\sum_{i=0}^{m-1}\sum_{\beta\in \Omega^*} \sum_{j=1}^{m-1}  \varphi_{\omega i \beta,\tau_j}'.$$
 Since $\sum_{j=0}^{m-1}\varphi^{'}_{j,1}=1$, we have $\sum_{i=0}^{m-1}\varphi'_{\omega i,1}=\varphi'_{\omega,1}\sum_{i=0}^{m-1}\varphi'_{i,1}=\varphi'_{\omega,1}$.
 Thus we have
\begin{align*}
\sum_{i=0}^{m-1}|J_{\omega i}|+\sum_{j=1}^{m-1}  g_{\omega,j}&=\varphi'_{\omega,1}+\sum_{i=0}^{m-1}\sum_{\beta\in\Omega^*} \sum_{j=1}^{m-1}  \varphi_{\omega i \beta,\tau_j}'+\sum_{j=1}^{m-1}\varphi'_{\omega ,\tau_j}\\
&=\varphi'_{\omega,1}+\sum_{\beta'\in \Omega^*} \sum_{j=1}^{m-1}  \varphi_{\omega \beta',\tau_j}'=|J_{\omega}|.
\end{align*}

 \end{proof}

Define
\begin{equation}\label{eq:Moran}
E=\bigcap_{n\geq 0} \bigcup_{|\omega|=n} J_\omega,
\end{equation}
and we call it an \emph{$m$-tree Cantor set}.

\begin{lemma}\label{lem:J-sigma}  For $L$ given in \eqref{eq:L}, we have
$$
|J_\omega|\leq L\varphi_{\omega,1}',\quad \forall ~\omega\in \Omega^*\setminus\{\emptyset\}.
$$
\end{lemma}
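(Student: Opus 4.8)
The plan is to prove the inequality by comparing, term by term, the series that defines $|J_\omega|$ in \eqref{eq:cylinder-length} with the series that defines $L$ in \eqref{eq:L}, and then to factor out the common ratio $\varphi_{\omega,1}'$. The only two ingredients I need are the multiplicativity of the contraction ratios along a word and the coordinate ordering condition \eqref{eq:ordering}.

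First I would rewrite the general term appearing in \eqref{eq:cylinder-length} by multiplicativity: for every $\beta\in\Omega^*$ and every $j$,
$$\varphi_{\omega\beta,\tau_j}'=\varphi_{\omega,\tau_j}'\,\varphi_{\beta,\tau_j}',$$
which is immediate from $\varphi'_{\omega,i}=\prod_k\varphi'_{\omega_k,i}$ (and remains valid for $\beta=\emptyset$ under the empty-product convention $\varphi'_{\emptyset,\tau_j}=1$). Next, since by construction $\tau_j=0$ or $\tau_j\geq 2$, the coordinate ordering condition \eqref{eq:ordering} gives $\varphi_{\omega,\tau_j}'\leq\varphi_{\omega,1}'$ in every case: when $\tau_j\geq 2$ the $\tau_j$-th ratio of each letter is smaller than its first ratio, so the product over the letters of $\omega$ is no larger; when $\tau_j=0$ the convention $\varphi_{\omega,\tau_j}'=0$ makes that term vanish on both sides. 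Substituting these two observations into \eqref{eq:cylinder-length} gives
$$|J_\omega|\leq\varphi_{\omega,1}'+\sum_{\beta\in\Omega^*}\sum_{j=1}^{m-1}\varphi_{\omega,1}'\,\varphi_{\beta,\tau_j}' =\varphi_{\omega,1}'\Bigl(1+\sum_{\beta\in\Omega^*}\sum_{j=1}^{m-1}\varphi_{\beta,\tau_j}'\Bigr)=L\,\varphi_{\omega,1}',$$
the last equality being precisely the definition \eqref{eq:L} of $L$.

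I do not expect a genuine obstacle here; the argument is a single clean estimate once the factorization $\varphi_{\omega\beta,\tau_j}'=\varphi_{\omega,\tau_j}'\varphi_{\beta,\tau_j}'$ is in place. The one point that requires care is to handle the degenerate terms ($\tau_j=0$, and the $\beta=\emptyset$ contribution) with the \emph{same} convention in both \eqref{eq:cylinder-length} and \eqref{eq:L}, so that the bracketed factor collapses exactly to $L$ rather than to a merely comparable quantity. I would also emphasize that the resulting bound is uniform in $\omega$, since this uniformity is what will later make the estimate usable for comparing the interval lengths of $E$ with the cylinder sizes of $\bar K$.
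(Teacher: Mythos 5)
Your proof is correct and follows essentially the same route as the paper: factor $\varphi_{\omega\beta,\tau_j}'=\varphi_{\omega,\tau_j}'\varphi_{\beta,\tau_j}'$, bound $\varphi_{\omega,\tau_j}'\leq\varphi_{\omega,1}'$ via the coordinate ordering condition (with the $\tau_j=0$ terms vanishing by convention), and collapse the remaining sum to $L-1$ times $\varphi_{\omega,1}'$. Your explicit attention to the degenerate conventions ($\tau_j=0$ and $\beta=\emptyset$) is if anything slightly more careful than the paper's write-up.
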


\begin{proof}
Here we recall that $\{\phi_j\}_{j=0}^{m-1}$ is an IFS of Lalley type then we have
$$\varphi'_{\omega,1}\geq \varphi'_{\omega,2}\geq\dots\geq \varphi'_{\omega,d}\quad \forall ~\omega\in \Omega^*\setminus\{\emptyset\}.$$
 Therefore,
\begin{align*}
|J_{\omega}|=&\varphi_{\omega,1}'+ \sum_{\beta\in \Omega^*} \sum_{j=1}^{m-1}  \varphi_{\omega \beta,\tau_j}'
 = \varphi_{\omega,1}'+ \sum_{\beta\in \Omega^*} \sum_{j=1}^{m-1} \varphi'_{\omega ,\tau_j} \varphi_{  \beta,\tau_j}'\\
 \leq  & \varphi_{\omega,1}'+ \varphi'_{\omega ,1} \sum_{\beta\in \Omega^*} \sum_{j=1}^{m-1}  \varphi_{ \beta,\tau_j}'
  = \varphi_{\omega,1}'+(L-1)\varphi'_{\omega,1}=\varphi'_{\omega,1}L.
 \end{align*}
The lemma is proved.
\end{proof}

\begin{thm}\label{thm:Lip} The self-affine sponge $\bar{K}$ introduced in Section \ref{spe_IFS} and $m$-tree Cantor set $E$ in \eqref{eq:Moran} are Lipschitz equivalent.
\end{thm}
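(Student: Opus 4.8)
The plan is to construct an explicit bijection $\Psi\colon E\to \bar K$ via the two coding maps and to show it is bi-Lipschitz. Both sets are coded by the same symbol space $\Omega^\infty$, where $\Omega=\{0,1,\dots,m-1\}$: on the sponge side, the coding map $\pi$ sends $\omega$ to $\bigcap_{n}\phi_{\omega|n}([0,1]^d)$, and on the interval side the nested structure $\{J_\omega\}$ sends $\omega$ to the unique point $\bigcap_n J_{\omega|n}$. So the first step is to verify that both coding maps are genuine bijections onto their respective sets; for $E$ this follows from total disconnectedness, and for $\bar K$ it follows from the neat projection condition guaranteeing that the cylinders $\phi_j(\bar K)$ meet only in a controlled way, so the natural map $\Psi=\pi\circ(\text{coding of }E)^{-1}$ is well defined.

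The heart of the argument is a distance comparison. Given two distinct points $x=\pi(\omega)$ and $y=\pi(\omega')$ in $\bar K$, let $\omega|k=\omega'|k$ be their longest common prefix, so $\omega_{k+1}\neq \omega'_{k+1}$; say $\omega_{k+1}=s<t=\omega'_{k+1}$. Writing $\beta=\omega|k$, I would factor out $\phi_\beta$ and estimate $|x-y|$ from above and below by the geometry inside the cylinder $\phi_\beta([0,1]^d)$. The key observation, already prepared by the ordering of the $\phi_j$ and the definitions of $\ba_j,\bb_{j-1}$ and $\Delta_j$, is that the separation between the $s$-th and $t$-th subcylinders is governed by the gaps $\bb_{r-1}$ to $\ba_r$ for $s<r\le t$, and the dominant term is controlled by $\varphi_{\beta,\tau_r}'$. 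On the interval side, the corresponding two points of $E$ lie in $J_{\beta s}$ and $J_{\beta t}$, whose separation is exactly $\sum_{r=s+1}^{t} g_{\beta,r}=\sum_{r=s+1}^{t}\varphi_{\beta,\tau_r}'$ plus the intervening cylinder lengths. The plan is thus to show both $|x-y|$ and $|\Psi(x)-\Psi(y)|$ are comparable, up to multiplicative constants independent of $\omega,\omega'$, to the single dominant quantity $\sum_{r=s+1}^{t}\varphi_{\beta,\tau_r}'$ (or to $\varphi_{\beta,1}'$ when the points lie in the same first-coordinate strip and are separated only by deeper structure).

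This is where Lemma~\ref{lem:c0c1} and Lemma~\ref{lem:J-sigma} do the real work. Lemma~\ref{lem:c0c1}(ii) gives $c_0\varphi_{\beta,\tau_r}'\le|\phi_\beta(\ba_r)-\phi_\beta(\bb_{r-1})|\le c_1\varphi_{\beta,\tau_r}'$, which pins the sponge-side gaps to the same quantities $\varphi_{\beta,\tau_r}'$ that define the interval-side gaps $g_{\beta,r}$; and Lemma~\ref{lem:J-sigma} bounds $|J_\omega|$ by $L\varphi_{\omega,1}'$, which together with the lower bound $|J_\omega|\ge\varphi_{\omega,1}'$ controls the interval-side cylinder diameters by exactly the same scale $\varphi_{\beta,1}'$ that, via Lemma~\ref{lem:c0c1}(i), controls the sponge-side cylinder diameters $|\phi_\omega(\ba)-\phi_\omega(\bb)|$. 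Combining these estimates, both distances are squeezed between constant multiples of $\max\{\varphi_{\beta,1}',\sum_{r=s+1}^{t}\varphi_{\beta,\tau_r}'\}$, yielding
$$
\frac{1}{C}\,|x-y|\le |\Psi(x)-\Psi(y)|\le C\,|x-y|
$$
for a uniform constant $C$ depending only on $c_0,c_1,L,d$.

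I expect the main obstacle to be the lower bound on the sponge-side distance $|x-y|$ when the two points share a long common first-coordinate history but differ in a later coordinate. In that regime the first-coordinate projection gives no separation, and one must argue that the ordering condition $\varphi_{\beta,1}'>\varphi_{\beta,2}'>\cdots>\varphi_{\beta,\tau_r}'$ forces the genuine geometric separation to be realized precisely at the $\tau_r$-th coordinate, with the higher coordinates contributing only lower-order, dominated terms. Handling the sum over $s<r\le t$ (so that the \emph{nearest} differing strip, not merely the first, dominates) and confirming that the triangle-inequality corrections from intervening cylinders are absorbed into the constant $C$ is the delicate bookkeeping; once that dominant-term identification is secured, the upper bounds and the interval-side estimates are routine consequences of the two preceding lemmas.
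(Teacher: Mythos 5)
Your overall route (match the codings, then compare distances using Lemmas \ref{lem:c0c1} and \ref{lem:J-sigma}) is the paper's route, but two steps in your plan are not merely unfinished bookkeeping; as stated they are wrong. First, the bijectivity of the coding maps: the neat projection condition only forces the \emph{open} cells to be disjoint, and consecutive cylinders of $\bar K$ may touch — precisely when $\Delta_{i+1}=\mathbf{0}$, i.e.\ $\tau_{i+1}=0$. In that case the point $\phi_\omega(\bb_i)=\phi_\omega(\ba_{i+1})$ has two codings; correspondingly, on the interval side the gap $g_{\omega,i+1}=\varphi'_{\omega,\tau_{i+1}}=0$, so $J_{\omega i}$ and $J_{\omega(i+1)}$ abut and the common endpoint also has two codings (the $m$-tree set of \eqref{eq:Moran} is \emph{not} assumed, and need not be, totally disconnected). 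So neither coding map is injective in general. This is fixable, and it is exactly the paper's first step: one shows the identifications agree, namely $\pi_{\bar K}(\bi)=\pi_{\bar K}(\bj)$ if and only if $\pi_E(\bi)=\pi_E(\bj)$, both being characterized by \eqref{eq:bifurcation} together with \eqref{eq:i+1}; well-definedness of the map needs only this, not bijectivity.

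The serious gap is your central squeeze claim, that both distances are comparable to $\max\bigl\{\varphi'_{\beta,1},\sum_{r=s+1}^{t}\varphi'_{\beta,\tau_r}\bigr\}$ where $\beta$ is the longest common prefix. This fails exactly in the adjacent case $t=s+1$: take $x,y$ in the two neighboring subcylinders but close to the gap (or, with $\Delta_t=\mathbf{0}$, converging to the common point from either side). Their codings still split at level $|\beta|+1$, yet both distances can be arbitrarily small compared with $\varphi'_{\beta,1}$, and with zero gap they tend to $0$ while your proposed dominant quantity stays bounded below; your parenthetical about ``the same first-coordinate strip'' does not cover this, since the points lie in \emph{different} strips. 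In this regime the distance is determined recursively at deeper levels near the common boundary, and no level-$(|\beta|+1)$ quantity controls it. This is precisely where the paper has to work: it restricts to the dense family of origin/terminus points $x=\phi_\alpha(\ba)$, $y=\phi_\beta(\bb)$; it proves $|x-y|\le c_1|f(x)-f(y)|$ not by a dominant-term identification but by a telescoping chain argument, since $[u,v]$ decomposes \emph{exactly} into cylinders and gaps, each dominating its sponge-side counterpart by \eqref{eq:Var1}--\eqref{eq:Var2}; and for the reverse inequality it splits into three cases, using in Cases 1--2 that the first-coordinate cells tile $[0,1]$ (so the total length of the covering family $\mathcal{B}([u,v])$ is bounded by $|\pi_1(x)-\pi_1(y)|$ — an exact-tiling mechanism your sketch never invokes), and in the adjacent Case 3 a bootstrap: Lemma \ref{lem:c0c1}(ii) gives $|x'-y'|\ge c_0\varphi'_{\bz,\tau_t}$ for the terminus/origin pair, the Case-2 bound is applied to $(x,x')$ and $(y',y)$, and one obtains an inequality with $|x-y|$ on both sides, from which $g_{\bz,t}\le C|x-y|$ is extracted. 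Without a substitute for these two mechanisms — the tiling bound and the Case-3 bootstrap — your plan does not close; the ``delicate bookkeeping'' you defer is the crux of the theorem, and the squeeze you propose to organize it around is false.
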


\begin{proof} Let $\pi_{\bar{K}}:\Omega^\infty\to \bar{K}$ be the coding projection of $\bar{K}$, and let
$\pi_E:\Omega^\infty \to E$ be the coding projection of $E$.

First, we claim that  $\pi_{\bar{K}}(\bi)=\pi_{\bar{K}}(\bj)$ if and only if $\pi_E(\bi)=\pi_E(\bj)$.
Indeed, $\pi_{\bar{K}}(\bi)=\pi_{\bar{K}}(\bj)$ implies that
\begin{equation}\label{eq:bifurcation}
\bi=i_1\dots i_k i (m-1)^\infty \quad \text{ and } \quad \bj=i_1\dots i_k(i+1)0^\infty.
\end{equation}
 Moreover, this further means
that
\begin{equation}\label{eq:i+1}
\tau_{i+1}=0.
\end{equation} Clearly, \eqref{eq:bifurcation} together with \eqref{eq:i+1} also imply that $\pi_{\bar{K}}(\bi)=\pi_{\bar{K}}(\bj)$.
Similarly, one can show that  $\pi_E(\bi)=\pi_E(\bj)$ if and only if \eqref{eq:bifurcation} and \eqref{eq:i+1} hold. Our claim is proved.

Now we define $f: \bar{K}\to E$ as
$$f(x)=\pi_E(\bi),$$
where $\bi\in \pi_{\bar{K}}^{-1}(x)$. The claim above justifies  that $f$ is well defined.

We are going to show that there exists $C_0>1$ such that
 \begin{equation}\label{eq:Lip}
 C_0^{-1}|x-y|\leq |f(x)-f(y)|\leq C_0|x-y|,\quad \forall~ x,y\in \bar{K}.
 \end{equation}

Since both
$ \{\phi_\omega(\ba);~\omega\in \Omega^*\}$ and $ \{\phi_\omega(\bb);~\omega\in \Omega^*\}$
are dense in $\bar{K}$, and that both $\{\phi_\omega(\ba);~\omega\in \Omega^n \}_{n\geq 1}$
and $\{\phi_\omega(\bb);~\omega\in \Omega^n \}_{n\geq 1}$ are increasing, we only need to show that \eqref{eq:Lip} holds for all $n\geq 1$ and
$x=\phi_\alpha(\ba)$ and $y=\phi_\beta(\bb)$
 with $\alpha, \beta\in \Omega^n$ and $\alpha\prec \beta$.

Denote $u=f(x)$ and $v=f(y)$.  By the assumption of $x$ and $y$, we know that $x$ has coding $\alpha 0^{\infty}$ and $y$ has coding $\beta (m-1)^{\infty}$. 
 Let $\bz=\alpha\wedge \beta$ and denote $p=|\bz|$.

We first prove that
 \begin{equation}\label{eq:geq}
 c_1|f(x)-f(y)|\geq |x-y|,\quad \forall ~x,y\in \bar{K}. 
 \end{equation}
 where $c_1=d\cdot |\ba-\bb|$  is the constant in Lemma \ref{lem:c0c1}.
 By Lemma \ref{lem:c0c1} (i), we have
 \begin{equation}\label{eq:Var1}
 |\phi_\omega(\bb)-\phi_\omega(\ba)|\leq c_1\varphi_{\omega,1}'\leq c_1|J_\omega|,\quad \forall~ \omega\in \Omega^*\setminus\{\emptyset\}.
 \end{equation}
 By Lemma \ref{lem:c0c1} (ii),  $\forall~ \omega\in \Omega^*\setminus\{\emptyset\}, j=1,\dots, m-1$, we have
 \begin{equation}\label{eq:Var2}
 |\phi_\omega(\ba_j)-\phi_\omega(\bb_{j-1})| \leq c_1\varphi'_{\omega,\tau_j}=c_1g_{\omega,j}.
 \end{equation}
 Now let $x=\phi_\alpha(\ba)$ and $y=\phi_\beta(\bb)$. Then $x$ and $y$ can be connected by a broken line with end points
 in $ \bigcup_{n\geq 1}\bigcup_{|\omega|=n}\phi_\omega(\{\ba,\bb\})$. Hence, by \eqref{eq:Var1} and  \eqref{eq:Var2}, we obtain
 $$
 |x-y|\leq c_1|f(x)-f(y)|=c_1|u-v|.
 $$
 
 Next, we prove the other direction inequality of \eqref{eq:Lip}, i.e.,
 $$|u-v|\leq C_0|x-y|, \text{where } x=\phi_{\alpha}(\ba),y=\phi_{\beta}(\bb) \text{ with } \alpha,\beta\in \Omega^n. $$

  Denote $\alpha=\alpha_1\dots\alpha_n\dots$ and $\beta=\beta_1\dots\beta_n\dots$. We know that $\alpha_1\dots\alpha_p=\beta_1\dots\beta_p$.
  Here we assume that $f(x)=u< v=f(y)$.  We divide the proof in the following three cases.
 \medskip

 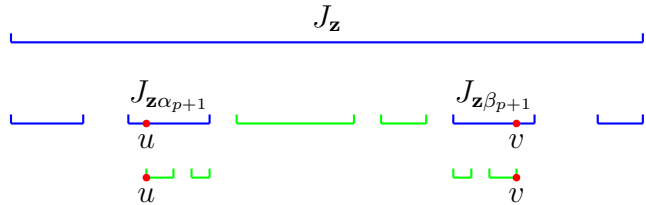
\begin{figure}[h]

\begin{tikzpicture}[scale=1.2]
  \draw[blue, thick] (1,2.5) -- (8,2.5);
  \draw[blue, thick] (1,2.5) -- (1,2.6); %
  \draw[blue, thick] (8,2.5) -- (8,2.6); %
  \node[above] at (4.5,2.5) {$J_{\bz}$};
\draw[blue, thick] (1,1.6) -- (1.8,1.6);
  \draw[blue, thick] (1,1.6) -- (1,1.7); %
  \draw[blue, thick] (1.8,1.6) -- (1.8,1.7); %

  \draw[blue, thick] (2.3,1.6) -- (3.2,1.6);
  \draw[blue, thick] (2.3,1.6) -- (2.3,1.7); %
  \draw[blue, thick] (3.2,1.6) -- (3.2,1.7); %
  \node[above] at (2.75,1.6) {$J_{{\bz}\alpha_{p+1}}$};
  \fill[red] (2.5,1.6) circle (1.2pt); %
  \node[below] at (2.5,1.6) {$u$};

  \draw[green, thick] (3.5,1.6) -- (4.8,1.6);
  \draw[green, thick] (3.5,1.6) -- (3.5,1.7); %
  \draw[green, thick] (4.8,1.6) -- (4.8,1.7); %

  \draw[green, thick] (5.1,1.6) -- (5.6,1.6);
  \draw[green, thick] (5.1,1.6) -- (5.1,1.7); %
  \draw[green, thick] (5.6,1.6) -- (5.6,1.7); %

  \draw[blue, thick] (5.9,1.6) -- (6.8,1.6);
  \draw[blue, thick] (5.9,1.6)-- (5.9,1.7); %
  \draw[blue, thick] (6.8,1.6) -- (6.8,1.7); %
  \node[above] at (6.35,1.6) {$J_{{\bz}\beta_{p+1}}$};
  \fill[red] (6.6,1.6) circle (1.2pt); 
  \node[below] at (6.6,1.6) {$v$};
\draw[blue, thick] (7.5,1.6) -- (8,1.6);
\draw[blue, thick] (7.5,1.6) -- (7.5,1.7); %
\draw[blue, thick] (8,1.6) -- (8,1.7); %

\draw[green, thick] (2.5,1) -- (2.8,1);
\draw[green, thick] (2.5,1) -- (2.5,1.1); %
\draw[green, thick] (2.8,1) -- (2.8,1.1); %
  \fill[red] (2.5,1) circle (1.2pt); 
  \node[below] at (2.5,1) {$u$};
\draw[green, thick] (3.0,1) -- (3.2,1);
\draw[green, thick] (3.0,1) -- (3.0,1.1); %
\draw[green, thick] (3.2,1) -- (3.2,1.1); %
\draw[green, thick] (5.9,1) -- (6.1,1);
  \draw[green, thick] (5.9,1)-- (5.9,1.1); %
  \draw[green, thick] (6.1,1) -- (6.1,1.1); %
\draw[green, thick] (6.3,1) -- (6.6,1);
  \draw[green, thick] (6.3,1)-- (6.3,1.1); %
  \draw[green, thick] (6.6,1) -- (6.6,1.1); %
  \fill[red] (6.6,1) circle (1.2pt); 
  \node[below] at (6.6,1) {$v$};

\end{tikzpicture}
\caption{$\beta_{p+1} \geq \alpha_{p+1} + 2$. The green intervals are elements in ${\mathcal B}([u,v])$.} \label{Thm5.1:case1}

\end{figure}

\textit{Case 1.} $\beta_{p+1}\geq \alpha_{p+1}+2$.

  Let
$${\mathcal B}([u,v])=\{J_\omega; ~ J_\omega\subset [u,v] \text{ but } J_{\omega^-}\not\in [u,v]\}, $$
where $\omega^-$ denotes the prefix of $\omega$ obtained by deleting the last letter of $\omega$.
Let $r_*=\min\{\varphi_{j,1};~ j=0,\dots, m-1\}$.

 Before preceding the proof, we introduce a notion of bank. Let $G$ be the gap between cylinders $J_{\sigma i}$ and $J_{\sigma (i+1)}$, we call $J_{\sigma i}$ the \emph{left bank} of $G$ and $J_{\sigma (i+1)}$ the \emph{right bank} of $G$. 
 
Let $G$ be a gap in $[u,v]$ between two elements in ${\mathcal B}([u,v])$.
The crucial fact is that at least one of the banks of $G$ belongs to ${\mathcal B}([u,v])$ by the assumption of Case 1. (See Figure \ref{Thm5.1:case1}.)
Let $J_{\alpha}$ a bank of $G$ belonging to ${\mathcal B}([u,v])$.
   Then
$$|G|\leq |J_{\alpha^-}|\leq L\varphi'_{\alpha^-,1}\leq \frac{L\varphi'_{\alpha,1}}{r_*}\leq  \frac{L}{r_*}|J_\alpha|:=C'|J_\alpha|,$$
where the second inequality is due to Lemma \ref{lem:J-sigma}, and the last inequality is due to \eqref{eq:cylinder-length}.

Since  an element  in ${\mathcal B}([u,v])$ is adjacent to at most two gaps, we have
\begin{equation}\label{eq:Case=1}
\begin{array}{rl}
|u-v| &\leq (1+2C') \sum_{ J_\alpha \in {\mathcal B}([u,v])]} |J_\alpha|\\
&\leq  L(1+2C')\sum_{ J_\alpha \in {\mathcal B}([u,v])]} \varphi_{\alpha,1}'\\
&\leq L(1+2C') |\pi_1(x)-\pi_1(y)|.
\end{array}
\end{equation}
It follows that
$$
|u-v|\leq  L(1+2C')|x-y|.
$$

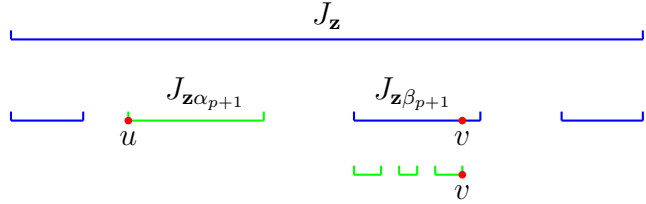
\begin{figure}[h]

\begin{tikzpicture}[scale=1.2]
  \draw[blue, thick] (1,2.5) -- (8,2.5);
  \draw[blue, thick] (1,2.5) -- (1,2.6); %
  \draw[blue, thick] (8,2.5) -- (8,2.6); %
  \node[above] at (4.5,2.5) {$J_{\bz}$};
\draw[blue, thick] (1,1.6) -- (1.8,1.6);
  \draw[blue, thick] (1,1.6) -- (1,1.7); %
  \draw[blue, thick] (1.8,1.6) -- (1.8,1.7); %

  \draw[green, thick] (2.3,1.6) -- (3.8,1.6);
  \draw[green, thick] (2.3,1.6) -- (2.3,1.7); %
  \draw[green, thick] (3.8,1.6) -- (3.8,1.7); %
  \node[above] at (3.15,1.6) {$J_{{\bz}\alpha_{p+1}}$};
  \fill[red] (2.3,1.6) circle (1.2pt); 
  \node[below] at (2.3,1.6) {$u$};

  \draw[blue, thick] (4.8,1.6) -- (6.2,1.6);
  \draw[blue, thick] (4.8,1.6)-- (4.8,1.7); %
  \draw[blue, thick] (6.2,1.6) -- (6.2,1.7); %
  \node[above] at (5.45,1.6) {$J_{{\bz}\beta_{p+1}}$};
  \fill[red] (6.0,1.6) circle (1.2pt); 
  \node[below] at (6.0,1.6) {$v$};
\draw[blue, thick] (7.1,1.6) -- (8,1.6);
\draw[blue, thick] (7.1,1.6) -- (7.1,1.7); %
\draw[blue, thick] (8,1.6) -- (8,1.7); %

\draw[green, thick] (4.8,1) -- (5.1,1);
  \draw[green, thick] (4.8,1)-- (4.8,1.1); %
  \draw[green, thick] (5.1,1) -- (5.1,1.1); %
\draw[green, thick] (5.3,1) -- (5.5,1);
  \draw[green, thick] (5.3,1)-- (5.3,1.1); %
  \draw[green, thick] (5.5,1) -- (5.5,1.1); %
  \draw[green, thick] (5.7,1) -- (6.0,1);
  \draw[green, thick] (5.7,1)-- (5.7,1.1); %
  \draw[green, thick] (6.0,1) -- (6.0,1.1); %
  \fill[red] (6.0,1) circle (1.2pt); 
  \node[below] at (6.0,1) {$v$};

\end{tikzpicture}
\caption{$\beta_{p+1} = \alpha_{p+1} + 1$ and $x$ is the origin of a cylinder of order $p+1$. The green intervals are in ${\mathcal B}([u,v])$.} \label{Thm5.1:case2}

\end{figure}

\textit{Case 2.}   $\beta_{p+1}= \alpha_{p+1}+1$ and $x$ is the origin of a cylinder of order $p+1$,
or $y$ is the  terminus of a cylinder of order $p+1$.

In this case,   for any gap $G$ between two adjacent cylinders in ${\mathcal B}([u,v])$, still at least one of  its banks
belongs to ${\mathcal B}([u,v])$. Therefore,
all the relations in \eqref{eq:Case=1} still holds.

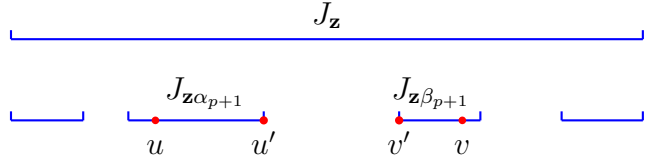
\begin{figure}[h]

\begin{tikzpicture}[scale=1.2]
  \draw[blue, thick] (1,2.5) -- (8,2.5);
  \draw[blue, thick] (1,2.5) -- (1,2.6); %
  \draw[blue, thick] (8,2.5) -- (8,2.6); %
  \node[above] at (4.5,2.5) {$J_{\bz}$};
\draw[blue, thick] (1,1.6) -- (1.8,1.6);
  \draw[blue, thick] (1,1.6) -- (1,1.7); %
  \draw[blue, thick] (1.8,1.6) -- (1.8,1.7); %

  \draw[blue, thick] (2.3,1.6) -- (3.8,1.6);
  \draw[blue, thick] (2.3,1.6) -- (2.3,1.7); %
  \draw[blue, thick] (3.8,1.6) -- (3.8,1.7); %
  \node[above] at (3.15,1.6) {$J_{{\bz}\alpha_{p+1}}$};
  \fill[red] (2.6,1.6) circle (1.2pt); 
  \node[below] at (2.6,1.5) {$u$};
   \fill[red] (3.8,1.6) circle (1.3pt); 
  \node[below] at (3.8,1.6) {$u'$};

  \draw[blue, thick] (5.3,1.6) -- (6.2,1.6);
  \draw[blue, thick] (5.3,1.6)-- (5.3,1.7); %
  \draw[blue, thick] (6.2,1.6) -- (6.2,1.7); %
  \node[above] at (5.65,1.6) {$J_{{\bz}\beta_{p+1}}$};
  \fill[red] (6.0,1.6) circle (1.2pt); 
  \node[below] at (6.0,1.5) {$v$};
    \fill[red] (5.3,1.6) circle (1.3pt); 
  \node[below] at (5.3,1.6) {$v'$};
\draw[blue, thick] (7.1,1.6) -- (8,1.6);
\draw[blue, thick] (7.1,1.6) -- (7.1,1.7); %
\draw[blue, thick] (8,1.6) -- (8,1.7); %

\end{tikzpicture}
\caption{$\beta_{p+1} = \alpha_{p+1} + 1$} \label{Thm5.1:case3}

\end{figure}

\textit{Case 3.}   $\beta_{p+1}= \alpha_{p+1}+1$.

Let $u'$ be the terminus of the cylinder $J_{\bz (\alpha_{p+1})}$ and let $v'$ be the origin
of the cylinder  $J_{\bz (\alpha_{p+1}+1)}$.(See Figure \ref{Thm5.1:case3}.)
Denote $t=\alpha_{p+1}+1$.  Then the gap between $J_{\bz (\alpha_{p+1})}$ and $J_{\bz (\alpha_{p+1}+1)}$ is $g_{\bz, t}=[u',v']$ and
\begin{equation}\label{eq:case2}
 |u-v|= |u-u'|+g_{\bz, t}+ |v'-v|. 
\end{equation}
Denote $x'=f^{-1}(u')$ and $y'=f^{-1}(v')$.  Clearly $x'$ is the terminus of a cylinder of rank $p+1$ and $y'$ is the origin of a cylinder of rank $p+1$. 

 Now we consider the pair $x$ and $x'$.
 Let $  \theta$ and $\theta'$ be a coding of $x$ and $x'$, respectively. Then
 $|\theta\wedge\theta'|\geq p$.
Therefore, $x$ and $x'$ satisfy the assumption of Case 2, so by \eqref{eq:Case=1}, we have
$$
|u-u'|\leq L(1+2C')|\pi_1(x)-\pi_1(x')|.
$$
Similarly,
$$
|v-v'|\leq L(1+2C')|\pi_1(y)-\pi_1(y')|.
$$
Then
$$
\begin{array}{rl}
|x-y| & \geq |x'-y'|-|x-x'|-|y-y'|\\
&\geq |x'-y'|-c_1|u-u'|-c_1|v-v'|\\
&\geq |x'-y'|-c_1L(1+2C')|\pi_1(x)-\pi_1(x')|-c_1L(1+2C')|\pi_1(y)-\pi_1(y')|\\
& \geq c_0\varphi_{\bz, \tau(t)}'-2c_1L(1+2C')|x-y|,
\end{array}
$$
where the second inequality is from \eqref{eq:geq} and  $c_0$ is the constant from Lemma \ref{lem:c0c1}.
It follows that
$$
g_{\bz, t}=\varphi_{\bz, \tau(t)}'\leq \frac{(1+2c_1L(1+2C'))}{c_0}|x-y|.
$$
This together with \eqref{eq:case2} imply
$$
|u-v|\leq \left ( \frac{(1+2c_1L(1+2C'))}{c_0}+2L(1+2C')\right )|x-y| .
$$
Take $C_0=\max\{c_1,\frac{1+2c_1L(1+2C')+2c_0L(1+2C')}{c_0}\}$, then \eqref{eq:Lip} is satisfied.
The theorem is proved.
\end{proof}

\subsection{Conformal dimension}

\begin{lemma}\label{lem:binary} The $m$-tree Cantor set $E$ has conformal dimension $1$.
\end{lemma}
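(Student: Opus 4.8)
The plan is to realize $E$ as a binary-tree Cantor set and then invoke Lemma \ref{lem:binarytree}. Since $E\subset\R$ we have the trivial bound $\dim_C E\le\dim_H E\le 1$ (take $f=\mathrm{id}$), so the entire task is to prove $\dim_C E\ge 1$. First I would convert the $m$-ary branching of $E$ into binary branching by a left-combing procedure. At each node $J_\omega$, whose children $J_{\omega 0},\dots,J_{\omega(m-1)}$ are separated by the gaps $g_{\omega,1},\dots,g_{\omega,m-1}$, I introduce $m-2$ auxiliary intervals: for $0\le s\le m-2$ the $s$-th binary split has left child $J_{\omega s}$ and right child $[\inf J_{\omega(s+1)},\ \sup J_{\omega(m-1)}]$, with the single deleted portion being the gap $g_{\omega,s+1}$ between them. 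Each such split removes exactly one open interval (empty when $g_{\omega,s+1}=0$), so this is a legitimate binary-tree decomposition, and its limit set is exactly $E$.

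Next I would verify the two hypotheses of Lemma \ref{lem:binarytree}. Put $r_*=\min_{0\le k\le m-1}\varphi'_{k,1}$ and $\rho=\max_{0\le k\le m-1}\varphi'_{k,2}/\varphi'_{k,1}$, where $\rho<1$ by the coordinate ordering condition \eqref{eq:ordering}. For a binary node arising from the $s$-th split of $J_\omega$, both children have length comparable to $\varphi'_{\omega,1}$: indeed $|J_{\omega s}|\ge\varphi'_{\omega s,1}\ge r_*\varphi'_{\omega,1}$ and $|J_{\omega s}|\le L\varphi'_{\omega s,1}\le L\varphi'_{\omega,1}$ by Lemma \ref{lem:J-sigma} and \eqref{eq:cylinder-length}, while the right child is a union of later cylinders plus smaller gaps, so its length likewise lies in $[r_*\varphi'_{\omega,1},\,L\varphi'_{\omega,1}]$. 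Hence the left/right length ratio lies in $[r_*/L,\,L/r_*]$, and $E$ is $T$-balanced with $T=L/r_*$. For the divergence condition, the denominator $|J_\sigma|-|J_{\sigma0}|-|J_{\sigma1}|$ equals the single gap $g_{\omega,s+1}=\varphi'_{\omega,\tau_{s+1}}$, which by $\tau_{s+1}\ge 2$ and coordinate ordering satisfies $\varphi'_{\omega,\tau_{s+1}}\le\rho^{|\omega|}\varphi'_{\omega,1}$ (and is $0$ when $\tau_{s+1}=0$), whereas the numerator is $\ge r_*\varphi'_{\omega,1}$. Therefore
$$
\frac{\min\{|J_{\sigma0}|,|J_{\sigma1}|\}}{|J_\sigma|-|J_{\sigma0}|-|J_{\sigma1}|}\ \ge\ r_*\,\rho^{-|\omega|}.
$$
Since each $m$-ary level is replaced by $m-1$ binary levels, a binary node at depth $n$ corresponds to an $\omega$ with $|\omega|\to\infty$ as $n\to\infty$; thus the infimum over $|\sigma|=n$ tends to $+\infty$, and Lemma \ref{lem:binarytree} gives $\dim_C E=1$.

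The step I expect to be the main obstacle is the total-disconnectedness requirement built into the definition of a binary-tree Cantor set, which can fail precisely when some $\tau_j=0$, i.e. when consecutive cylinders touch with no gap. If every gap vanishes along some chain, $E$ may contain a nondegenerate subinterval $I$. In that case $\dim_C E\ge 1$ holds trivially: for every $f\in\mathcal{QS}(E)$ the image $f(I)$ is a nondegenerate continuum, so $\dim_H f(E)\ge\dim_H f(I)\ge 1$, whence $\dim_C E\ge 1$. One therefore treats this degenerate case separately, while in the totally disconnected case the estimates above apply verbatim. A minor bookkeeping point is to confirm the two-sided length bounds for the auxiliary right-children produced by the combing; this is immediate since each such interval is a union of at most $m-1$ cylinders, all of length $\asymp\varphi'_{\omega,1}$, together with gaps that are even smaller, so the comparison constants remain the same $r_*$ and $L$ throughout.

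\begin{proof}

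\end{proof}
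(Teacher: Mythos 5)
Your proposal is correct and follows essentially the same route as the paper: the same left-combing binary decomposition (left child $=$ first cylinder, right child $=$ union of the remaining ones), the same $T$-balance constant $T=L/r_*$ via Lemma \ref{lem:J-sigma}, and the same divergence estimate from the coordinate ordering condition feeding into Lemma \ref{lem:binarytree}. Your explicit treatment of the degenerate case where gaps vanish and $E$ could fail to be totally disconnected (handled trivially since a nondegenerate continuum has quasisymmetric images of Hausdorff dimension at least $1$) is a small point of care that the paper's proof passes over silently.
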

\begin{proof}
 According to $E$, we construct a binary-tree Cantor set $F$ in the following way.
Set  $F_\emptyset=[0, L]$.

Now we define the interval $F_\sigma$, $\sigma\in \{0,1\}^*$, inductively.

Suppose $F_\sigma$ has been defined in the way such that  either $F_\sigma=J_\alpha$ or $F_\sigma$ is a finite union of consecutive subintervals of $J_\alpha$
for some $\alpha\in \Sigma^*$, say,
$$
F_\sigma=\bigcup_{j=k_1}^{k_2} J_{\alpha j}.
$$
Here $0\leq k_1\leq k_2\leq m-1$, but $(k_1,k_2)\neq (0, m-1)$.

If  $F_\sigma=J_\alpha$ ,  we define
$$
F_{\sigma 0}= J_{\alpha 0},\quad F_{\sigma 1}=\bigcup_{j=1}^{m-1} J_{\alpha j}.
$$
If $k_1<k_2$, we define
$$
F_{\sigma 0}= J_{\alpha (k_1)},\quad F_{\sigma 1}=\bigcup_{j=k_1+1}^{k_2} J_{\alpha j}.
$$
By Lemma \ref{lem:J-sigma} and inequality \eqref{eq:cylinder-length}, we have
$$|J_{\alpha}|\leq L\varphi_{\alpha,1}'\leq L\varphi_{\alpha j,1}'/r_*\leq L/r_* |J_{\alpha j}|,$$
 which implies that
$$
\frac{r_*}{L}\leq \frac{|F_{\sigma 0}|}{|F_{\sigma 1}|}\leq \frac{L}{r_*},
$$
so $F$ is $T$-balanced with $T=L/r_*$.

Next, if $\text{dist}(F_{\sigma 0}, F_{\sigma 1})>0$, we have
$$
\frac{|F_{\sigma 0}|}{\text{dist}(F_{\sigma 0}, F_{\sigma 1})}\geq \frac{|J_{\alpha k_1 0}|}{\varphi'_{\alpha k_1, \tau_1}}
=\frac{|J_{\alpha k_1 0}|}{\varphi'_{\alpha k_1, 1}}\cdot \frac{\varphi'_{\alpha k_1, 1}}{\varphi'_{\alpha k_1, \tau_1}}
\geq \frac{r_*}{C}\frac{\varphi'_{\alpha k_1, 1}}{\varphi'_{\alpha k_1, 2}}\to \infty,
$$
as $|\alpha|\to \infty$, which is equivalent to $|\sigma|\to \infty$.

 Therefore, by Lemma \ref{lem:binarytree}, we have $\dim_C E=\dim_C F=1$.
\end{proof}

Until now, we have all preparations for showing Theorem \ref{thm:simple-IFS}.

\begin{proof}[$\textbf{Proof of Theorem \ref{thm:simple-IFS} }$]
By the above Theorem \ref{thm:Lip} we have
 $\dim_C(\bar{K})=\dim_C(E)$. Then by Lemma \ref{lem:binary} the theorem is proved.
 \end{proof}

\begin{theorem}\label{thm:last}
 If $K$ is not uniformly disconnected, then $\dim_C K\geq 1$.
\end{theorem}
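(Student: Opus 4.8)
The plan is to exploit the monotonicity of conformal dimension under inclusion, namely that $A\subset B$ implies $\dim_C A\leq \dim_C B$. This holds because every $f\in{\mathcal QS}(B)$ restricts to a map $f|_A\in{\mathcal QS}(A)$ with the same distortion function $\eta$, while $\dim_H f(A)\leq \dim_H f(B)$; taking the infimum over $f\in{\mathcal QS}(B)$ gives the claim. Hence it suffices to locate inside $K$ a subset $\bar K$ with $\dim_C\bar K=1$, and the natural candidate is a sub-sponge matching the hypotheses of Theorem \ref{thm:simple-IFS}.

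First I would invoke Theorem \ref{thm:Lalley}: since $K$ is not uniformly disconnected, statement (iii) must fail, so some fiber IFS has attractor $[0,1]$. Write it as $G(u)=\{h_1,\dots,h_m\}$ with $u\in\Phi_{\{1,\dots,\ell\}}$ of rank $\ell$, $0\leq\ell\leq d-1$. For each $h_i$ I would select one map $\psi_i\in\Phi$ whose first $\ell$ coordinate maps coincide with $u$ and whose $(\ell+1)$-st coordinate map is $h_i$, writing $\psi_i=(u,h_i,w_{i,\ell+2},\dots,w_{i,d})$. Taking $\bar K$ to be the attractor of $\bar\Phi=\{\psi_1,\dots,\psi_m\}\subset\Phi$ guarantees $\bar K\subset K$. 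Since every $\psi_i$ acts on the first $\ell$ coordinates by the same contraction $u$, these coordinates collapse to the fixed point $x_0\in\R^\ell$ of $u$, and the block-diagonal form lets me factor $\bar K=\{x_0\}\times\bar K'$, where $\bar K'\subset\R^{d-\ell}$ is generated by the restrictions $\psi_i'=(h_i,w_{i,\ell+2},\dots,w_{i,d})$. Adjoining constant coordinates does not alter Euclidean distances, so $\bar K$ and $\bar K'$ are isometric and $\dim_C\bar K=\dim_C\bar K'$; this reduction is essentially the mechanism already used in Lemma \ref{lem:line} for the extreme case $\ell=d-1$.

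The remaining work is to verify that $\bar\Phi'$ is an IFS of Lalley--Gatzouras type in $\R^{d-\ell}$ satisfying conditions (i) and (ii) of Theorem \ref{thm:simple-IFS}. Coordinate ordering is inherited from the ordering of $\Phi$ on the coordinates $\ell+1<\dots<d$. Because the $\psi_i'$ are pairwise distinct already in their leading coordinate $h_i$, and the intervals $h_i([0,1])$ tile $[0,1]$ without overlap (the attractor of the simple IFS $G(u)$ being $[0,1]$), every major projection of $\bar\Phi'$ is separated by the first coordinate, so the neat projection condition holds automatically. The root fiber IFS of $\bar\Phi'$ is precisely $G(u)$, of cardinality $m$ equal to the number of maps and with attractor $[0,1]$; and since each $\psi_i'$ is determined by $h_i$, every vertex of rank $\geq 1$ in its labeled tree has a unique offspring. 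Thus Theorem \ref{thm:simple-IFS} applies and yields $\dim_C\bar K'=1$, whence $\dim_C\bar K=1$ and finally $\dim_C K\geq\dim_C\bar K=1$.

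I expect the main obstacle to be conceptual rather than computational: recognizing that the fiber IFS producing $[0,1]$ may live at an intermediate rank $\ell$, so that Theorem \ref{thm:simple-IFS} --- phrased for the leading coordinate --- cannot be applied to $K$ directly but only after freezing the first $\ell$ coordinates and re-reading the coordinate block $\ell+1,\dots,d$ as a self-affine sponge in its own right. Once this reduction is set up, checking the Lalley--Gatzouras axioms and the two cardinality conditions for $\bar\Phi'$ is routine, the key simplifying observation being that single-offspring fibers make every projection disjoint through the leading coordinate alone.
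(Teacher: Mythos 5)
Your proposal is correct and takes essentially the same route as the paper's proof: invoke Theorem \ref{thm:Lalley} to extract a fiber IFS $G(u)$ with attractor $[0,1]$, freeze the first $\ell$ coordinates at the fixed point of $u$ to embed a sub-sponge $\{x_0\}\times \bar K' \subset K$ satisfying the hypotheses of Theorem \ref{thm:simple-IFS}, and conclude by monotonicity of conformal dimension under inclusion. Your write-up is in fact slightly more complete than the paper's, which leaves the verification of the Lalley--Gatzouras axioms, the single-offspring condition, and the monotonicity step implicit.
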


\begin{proof} Since $K$  is not uniformly disconnected, by Theorem \ref{thm:Lalley}, there exists a vertex $u
=(g_1,g_2,...,g_s)$ such that the fiber IFS $G(u)$ has attractor $[0,1]$.

Write the fiber IFS $G(u)$ as
$$
G(u)=\{h_0,\dots, h_{m-1}\}.
$$
Let $F=\{\phi_j=(\varphi_{j,1},\dots, \varphi_{j, d})\}_{j=0}^{m-1}$ be a subIFS of $\Phi$ such that
$$
\phi_{j}=(g_1,\dots, g_s, h_j, \varphi_{j,s+2}, \dots, \varphi_{j,d}).
$$
(The choices of $(\varphi_{j,s+2}, \dots, \varphi_{j,d})$ is not unique.)
Let
$$F_0=\{(h_j, \varphi_{j,s+2}, \dots, \varphi_{j,d})\}_{j=0}^{m-1}.$$
 Let $\bar{K}$ be the attractor
of $F_0$.
By Theorem \ref{thm:simple-IFS}, we have $\dim_C \bar{K}=1$.

Let $z_0$ be the fixed point of the map $(g_1,\dots, g_s):\R^s\to \R^s$. Then
$$
z_0\times \bar{K}\subset K.
$$
Consequently, $\dim_C K\geq \dim_C \bar{K}= 1$. The theorem is proved.
\end{proof}

\begin{proof}[\textbf{Proof of Theorem \ref{thm:dim0}}]
   If $K$ is not uniformly disconnected, then $\dim_C K\geq 1$  by Theorem \ref{thm:last}.
If $K$ is  uniformly disconnected, then $\dim_C K=0$ by
  Lemma \ref{lem:first}.
\end{proof}

\end{document}